	\newtheorem{thm}{Theorem}[section]
	\newtheorem{lem}[thm]{Lemma}
	\newtheorem{prop}[thm]{Proposition}
\begin{document}

\title[On the distribution of Rudin-Shapiro polynomials]{On the distribution of Rudin-Shapiro polynomials and lacunary walks on $SU(2)$}
\author{Brad Rodgers}
\address{Department of Mathematics, University of Michigan, 530 Church St., Ann Arbor, MI 48109}
\email{rbrad@umich.edu}
\date{}

\begin{abstract}
	We characterize the limiting distribution of Rudin-Shapiro polynomials, showing that, normalized, their values become uniformly distributed in the disc. This resolves conjectures of Saffari and Montgomery. Our proof proceeds by relating the polynomials' distribution to that of a product of weakly dependent random matrices, which we analyze using the representation theory of $SU(2)$. Our approach leads us to a non-commutative analogue of the classical central limit theorem of Salem and Zygmund, which may be of independent interest.
\end{abstract}


\maketitle

\section{Introduction}
\label{sec:1}

The Rudin-Shapiro polynomials are defined inductively as follows: 
$$
P_0(z) = Q_0(z) = 1,
$$ 
and
\begin{equation}
\label{eq:RS1}
P_{k+1}(z) = P_k(z) + z^{2^k} Q_k(z),
\end{equation}
\begin{equation}
\label{eq:RS2}
Q_{k+1}(z) = P_k(z) - z^{2^k}Q_k(z).
\end{equation}
Thus defined, $P_k$ and $Q_k$ are of degree $2^k-1$ and have all coefficients equal to $1$ or $-1$. These polynomials were discovered independently by Golay \cite{Go}, Shapiro \cite{Sh}, and Rudin \cite{Ru} and in addition to having many interesting algebraic and combinatorial properties in their own right \cite{Bri, BrLoMo}, they play an important role in for instance signal processing \cite{laCo}, the study of finite automata \cite{AlSh}, and especially as a source of counterexamples in classical analysis \cite{Mo, KaSa}.

Perhaps most importantly they furnish an example of polynomials $P$ of arbitrarily high degree $N$ with all coefficients $1$ or $-1$ such that $|P(z)| \ll \sqrt{N}$ for all $|z|=1$. Indeed, for $|z|=1$, it may be shown inductively that
$$
|P_k(z)|^2 + |Q_k(z)|^2 = 2^{k+1}.
$$

It remains an open problem whether there exist polynomials $P$ of arbitrarily high degree $N$ with all coefficients $1$ or $-1$ such that $\sqrt{N} \ll |P(z)| \ll \sqrt{N}$ for all $|z|=1$. (See \cite{Od} for recent numerical evidence that such polynomials exist.\footnote{If the coefficients are instead only restricted to be complex unimodular, such polynomials do indeed exist and in fact one can ensure $|P(z)|\sim \sqrt{N}$ uniformly; see \cite{Ka,Be,BoBo}.}) 
As a consequence of results proved in this paper we will see that Rudin-Shapiro polynomials do not satisfy a lower bound of this sort -- this has always been quite evident numerically, though there does not seem to be a proof of this fact already in the literature\footnote{The polynomials $P_k$ satisfy the recursion $P_{k+2}(z) = (1-z^{2^{k+1}})P_{k+1}(z) + 2 z^{2^{k+1}}P_k(z)$ (see \cite{BrLoMo}), and so it follows inductively that $P_k(-1) = 0$ for odd $k$. In this way, H. Montgomery has pointed out, one may observe for odd $k$ that $P_k(z)$ does not satisfy lower bound of this sort, but it is not clear that a similarly simple proof exists for even $k$.} -- and it is natural moreover to ask how far they deviate from such a lower bound. Investigations related to this question date at least back to Saffari in 1980, who let $\omega$ be a random variable distributed uniformly on the unit circle $|\omega|=1$ and asked about the radial distribution of $P_k(\omega)$. Saffari conjectured that for any $n\geq 0$,
\begin{equation}
\label{eq:saffari_moments}
\mathbb{E}\, \Big| \frac{P_k(\omega)}{\sqrt{2^{k+1}}}\Big|^{2n} \sim \frac{1}{n+1}
\end{equation}
as $k\rightarrow\infty$ or equivalently that
\begin{equation}
\label{eq:saffari_dist}
\mathbb{P}\Big(\,\Big| \frac{P_k(\omega)}{\sqrt{2^{k+1}}}\Big|^2 \in [\alpha,\beta]\Big) \sim \beta-\alpha
\end{equation}
as $k\rightarrow\infty$, for $0\leq \alpha < \beta \leq 1$. Saffari evidently did not publish this conjecture himself, and it first appeared in print in work of Doche and Habsieger \cite{DoHa}, who verified \eqref{eq:saffari_moments} for $n\leq 26$. Erd\'elyi \cite{Er} has considered closely related questions involving the Mahler measure of Rudin-Shapiro polynomials, and obtains absolute upper bounds for moments.

In this note we resolve Saffari's conjecture in general:

\begin{thm}[Saffari's Conjecture]
\label{thm:saffari}
For $\omega$ a random variable uniformly distributed on the unit circle, \eqref{eq:saffari_moments} and \eqref{eq:saffari_dist} are true.
\end{thm}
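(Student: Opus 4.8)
The plan is to encode the recursion \eqref{eq:RS1}--\eqref{eq:RS2} as a product of random $2\times 2$ unitary matrices and then analyze that product with the representation theory of $SU(2)$. Write $\omega=e^{2\pi i\theta}$ with $\theta$ uniform on $[0,1)$, and set $M(\phi)=\tfrac1{\sqrt2}\begin{pmatrix}1&e^{2\pi i\phi}\\ 1&-e^{2\pi i\phi}\end{pmatrix}\in U(2)$. Then \eqref{eq:RS1}--\eqref{eq:RS2} read $\binom{P_{k+1}(\omega)}{Q_{k+1}(\omega)}=\sqrt2\,M(2^k\theta)\binom{P_k(\omega)}{Q_k(\omega)}$, so that
\[
\binom{P_k(\omega)}{Q_k(\omega)}=2^{(k+1)/2}\,U_k\,u_0,\qquad U_k:=M(2^{k-1}\theta)\cdots M(2\theta)M(\theta),\quad u_0:=\tfrac1{\sqrt2}\binom{1}{1}.
\]
In particular $P_k(\omega)/\sqrt{2^{k+1}}=\langle e_1,U_ku_0\rangle$, with $U_ku_0$ a random unit vector of $\mathbb C^2$; so $|P_k(\omega)/\sqrt{2^{k+1}}|^2$ is the squared modulus of the first coordinate of the random point $U_ku_0\in S^3$. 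Since for $w$ Haar-distributed on $S^3$ the quantity $|w_1|^2$ is \emph{exactly} uniform on $[0,1]$, with moments $1/(n+1)$, Theorem~\ref{thm:saffari} will follow once $U_ku_0$ is shown to equidistribute, in the appropriate weak sense, toward the uniform measure on $S^3$.

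It is cleanest to work with moments: the limiting law is supported on $[0,1]$, hence determined by its moments, so it suffices to prove $\mathbb E\,|\langle e_1,U_ku_0\rangle|^{2n}\to 1/(n+1)$ for every $n\ge 1$. Now $|\langle e_1,U_ku_0\rangle|^{2n}$ is a fixed linear combination of the matrix entries of $\pi_n(U_k)$, where $\pi_n=(\mathrm{id})^{\otimes n}\otimes\overline{(\mathrm{id})}^{\otimes n}$. The centre of $U(2)$ acts trivially in $\pi_n$, so $\pi_n$ factors through $PU(2)\cong SO(3)$ and decomposes into the irreducibles $\rho_0,\rho_2,\dots,\rho_{2n}$ of $SO(3)$. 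Hence $\mathbb E\,|\langle e_1,U_ku_0\rangle|^{2n}=\sum_\ell\langle\xi_\ell,\mathbb E[\rho_\ell(\overline{U_k})]\,\eta_\ell\rangle$ for suitable fixed vectors $\xi_\ell,\eta_\ell$, where $\overline{U_k}\in SO(3)$ is the image of $U_k$. The $\ell=0$ term equals $\int_{SU(2)}|\langle e_1,Vu_0\rangle|^{2n}\,dV=1/(n+1)$ (the Haar value just computed, since $SU(2)\to SO(3)$ carries Haar to Haar), so everything reduces to the single assertion
\[
\mathbb E\big[\rho_\ell(\overline{U_k})\big]\longrightarrow 0\qquad(k\to\infty)
\]
for every nontrivial irreducible representation $\rho_\ell$ of $SO(3)$ --- a non-commutative analogue of the Salem--Zygmund central limit theorem for the lacunary walk $M(\theta),M(2\theta),M(4\theta),\dots$ on $SU(2)$.

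To prove this I would exploit that the binary digits $b_1,b_2,\dots$ of $\theta$ are i.i.d.\ fair coin flips and that $e^{2\pi i 2^j\theta}$ depends on $b_{j+1},b_{j+2},\dots$ with the influence of $b_{j+m}$ being $O(2^{-m})$; hence $\rho_\ell(\overline{M(2^j\theta)})$ agrees, up to $O(2^{-r})$ in operator norm, with a fixed function $F_r$ of the window $(b_{j+1},\dots,b_{j+r})$ alone. Replacing all $k$ factors of $\rho_\ell(\overline{U_k})=\prod_{j<k}\rho_\ell(\overline{M(2^j\theta)})$ by these truncations costs only $O(k\,2^{-r})$, negligible once $r=r(k)\to\infty$ slowly. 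For the truncated product one runs a Salem--Zygmund-type blocking argument: partition $\{0,\dots,k-1\}$ into $\asymp k/L$ consecutive blocks of length $L\gg r$ separated by spacers of length $\asymp r$, so that, after conditioning on the $O(r)$ digits a block shares with its neighbours, distinct block-products become independent. Because the $\overline{M(\phi)}$ generate all of $SO(3)$ (readily checked from $M(\phi)=H\,\mathrm{diag}(1,e^{2\pi i\phi})$ with $H$ the Hadamard matrix) and $\rho_\ell$ is nontrivial, the convolution powers of the law of $\overline{M(2\pi\Theta)}$ converge to Haar measure; quantitatively, the (conditional) expectation of a block-product of length $L$ has operator norm $\le 1-\delta$ for a fixed $\delta=\delta(\rho_\ell)>0$ once $L$ is large. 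Multiplying over the $\asymp k/L$ blocks then gives $\|\mathbb E[\rho_\ell(\overline{U_k})]\|\ll(1-\delta)^{k/L}+k\,2^{-r}\to0$.

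I expect the main obstacle to be the last point --- the uniform contraction for block-products. The difficulty is that consecutive truncated factors $F_r(b_{j+1},\dots,b_{j+r})$ and $F_r(b_{j+2},\dots,b_{j+r+1})$ overlap in $r-1$ digits, so the walk is not driven by independent increments but is a cocycle over the de Bruijn shift on $\{0,1\}^r$; what is needed is a no-invariant-section statement for this Markov random walk on $SO(3)$, with a contraction rate that does not degrade as $r\to\infty$ --- equivalently, a spectral gap, uniform in $r$, for the associated averaging operator. Establishing this, and abstracting it into a clean non-commutative analogue of the Salem--Zygmund theorem valid for lacunary walks on a compact group, is the technical heart of the matter. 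Once $\mathbb E[\rho_\ell(\overline{U_k})]\to0$ is known, \eqref{eq:saffari_moments} follows from the moment identity above, and \eqref{eq:saffari_dist} follows from \eqref{eq:saffari_moments} because a probability measure on $[0,1]$ is determined by its moments.
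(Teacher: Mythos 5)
Your reduction matches the paper's exactly up to the key analytic step: you encode \eqref{eq:RS1}--\eqref{eq:RS2} as a product of random unitaries and observe, via the Peter--Weyl criterion, that everything hinges on showing $\mathbb{E}\,\pi\big(g(\omega^{2^k})\cdots g(\omega)\big)\to 0$ for every nontrivial irreducible $\pi$; the paper states this as Theorems~\ref{thm:matrix_product1} and~\ref{Levy_generalization}. But your proposed proof of that key fact --- truncating each factor to a window of $r$ binary digits of $\theta$, running a Salem--Zygmund blocking argument, and appealing to a spectral gap ``uniform in $r$'' for the induced cocycle over the de Bruijn shift --- is exactly where the argument breaks, and you have in fact correctly flagged this yourself as an unproven assumption. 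The problem is not merely technical. The hypotheses you invoke to justify the gap (the marginals $\overline{M(\phi)}$ generate $SO(3)$, $\rho_\ell$ is nontrivial, the walk is non-degenerate and aperiodic) are \emph{not} sufficient: Section~4 of the paper exhibits a step function $f:\mathbb{R}/\mathbb{Z}\to\mathbb{Z}/2\mathbb{Z}$ depending on only three binary digits, non-degenerate and aperiodic in precisely your sense, for which $\mathbb{P}\big(f(2^k t)\cdots f(t)=0\big)=5/8$ for all $k\geq 1$. For that $f$ the transfer operator of the de Bruijn cocycle has an eigenvalue~$1$ on the relevant sector, so any spectral-gap claim derived only from ``generation $+$ nontriviality'' is false. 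In other words, a quasi-independence argument must use the specific algebraic form of $g(\omega)$, and your sketch never does.

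What the paper does instead is entirely algebraic and tailored to $g$. Writing $t^\ell(g(\omega^{1/2}))=\tau^\ell\,\mathrm{diag}(\omega^{-\ell},\dots,\omega^{\ell})$, it introduces a linear operator $S_\ell$ on the finite-dimensional space $(P_\ell)^{2\ell+1}$ of Laurent-polynomial vectors which records ``keep the coefficients of even powers of $\omega$ after one multiplication by $T^\ell$,'' so that $\mathbb{E}\,T^\ell(\omega^{2^k})\cdots T^\ell(\omega)v=\mathbb{E}\,(S_\ell)^{k+1}v$. Unitarity of $T^\ell$ gives $\rho(S_\ell)\leq 1$ with equality forcing an eigenvector $A$ satisfying $T^\ell(\omega)A(\omega)=cA(\omega^2)$, $|c|=1$; comparing coefficients yields a homogeneous linear system whose only solution is $A=0$, by Proposition~\ref{prop:special_matrix} (invertibility of $\tau^\ell$, $|\tau^\ell_{00}|<1$, and a divisibility argument with $(z\pm1)^{\ell+1}$). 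This is a closed, finite-dimensional verification for each $\ell$, and it is exactly the ingredient exploiting the special structure of $\tau^\ell$ that your blocking scheme lacks. So your proposal is correct in its framing and honest about where the difficulty lies, but as written it rests on a contraction that is not established and that the paper's own counterexample shows cannot follow from the qualitative hypotheses you cite.
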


In fact with more work we will be able to resolve a more recent conjecture of Montgomery \cite{Mo, Mo2} which generalizes Saffari's: we show that $P_k(\omega)/\sqrt{2^{k+1}}$ tends towards uniform distribution in the unit disc $D = \{z \in \mathbb{C}:\, |z|\leq 1\}$, as $k\rightarrow\infty$:

\begin{thm} [Montgomery's Conjecture]
\label{thm:montgomery}
For any rectangle $E\subseteq D$,
$$
\mathbb{P}\Big( \frac{P_k(\omega)}{\sqrt{2^{k+1}}} \in E \Big) \sim \frac{1}{\pi} |E|,
$$
as $k\rightarrow\infty$.
\end{thm}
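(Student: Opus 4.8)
The plan is to recast the Rudin--Shapiro recursion as a product of random unitary matrices and to prove that this product equidistributes toward Haar measure on $U(2)$. Stacking $P_k,Q_k$ into a vector, \eqref{eq:RS1}--\eqref{eq:RS2} reads $(P_{k+1}(z),Q_{k+1}(z))^t = M_k(z)\,(P_k(z),Q_k(z))^t$ with $M_k(z)=\left(\begin{smallmatrix}1 & z^{2^k}\\ 1 & -z^{2^k}\end{smallmatrix}\right)$, and for $|z|=1$ one has $M_k(z)=\sqrt2\,H\,D(z^{2^k})$ with $H=\tfrac1{\sqrt2}\left(\begin{smallmatrix}1&1\\1&-1\end{smallmatrix}\right)$ the Hadamard matrix and $D(w)=\mathrm{diag}(1,w)$, so that $HD(w)\in U(2)$ whenever $|w|=1$. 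Iterating with $\omega=e^{2\pi i\theta}$, $\theta$ uniform on $[0,1)$, gives
$$
\frac{1}{\sqrt{2^{k+1}}}\binom{P_k(\omega)}{Q_k(\omega)} = W_k v, \qquad W_k:=HD(\omega^{2^{k-1}})\,HD(\omega^{2^{k-2}})\cdots HD(\omega), \quad v=\tfrac1{\sqrt2}\binom11,
$$
so $P_k(\omega)/\sqrt{2^{k+1}}$ is the first coordinate of $W_kv$ for a \emph{fixed} unit vector $v$. Since the first coordinate of a uniformly chosen point of the unit sphere of $\mathbb{C}^2$ is uniformly distributed on $D$, and since $|P_k(\omega)|^2\le 2^{k+1}$ keeps these variables in the compact set $\overline D$, it will suffice to prove that the law of $W_k$ converges to Haar measure on $U(2)$: then $W_kv$ converges in distribution to the uniform law on the unit sphere, so $P_k(\omega)/\sqrt{2^{k+1}}=(W_kv)_1$ converges to the uniform law on $D$, whence (the latter being absolutely continuous, by the portmanteau theorem) the rectangle probabilities of Theorem \ref{thm:montgomery}, the diagonal moments $\mathbb{E}\,|(W_kv)_1|^{2n}\to1/(n+1)$ being Theorem \ref{thm:saffari}.

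By Peter--Weyl (and Stone--Weierstrass) it is enough to prove $\mathbb{E}[\rho(W_k)]\to 0$ for each fixed non-trivial irreducible representation $\rho$ of $U(2)$; equivalently $\mathbb{E}\bigl[P_k(\omega)^a\,\overline{P_k(\omega)}^{\,b}\bigr]/(2^{k+1})^{(a+b)/2}\to \delta_{ab}/(a+1)$, whose right side is the $(a,b)$ moment of the uniform law on $D$, read off also from $\int_{U(2)}W^{\otimes a}\otimes\overline W^{\otimes b}\,dW$. In an irreducible $\rho$ the torus element $D(\omega^{2^j})$ acts by a diagonal matrix $\Lambda_j$ with entries $\omega^{2^j q}$, $q$ running over the finite weight multiset of $\rho$, so $\rho(W_k)=\rho(H)\Lambda_{k-1}\rho(H)\Lambda_{k-2}\cdots\rho(H)\Lambda_0$, and expanding in the weight basis,
$$
\bigl(\mathbb{E}[\rho(W_k)]\bigr)_{a,b}=\sum_{\substack{\text{weight paths }a=p_k,\dots,p_0=b\\ \sum_{j}2^{j}p_{j}=0}}\ \prod_{j=0}^{k-1}\rho(H)_{p_{j+1},p_{j}},
$$
because $\mathbb{E}_\omega\bigl[\omega^{\sum_j 2^j p_j}\bigr]$ vanishes off the ``resonance'' locus $\sum_j 2^j p_j=0$.

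Two ingredients then drive the estimate. The representation-theoretic one: were the phases $\omega^{2^j}$ independent and uniform, $\mathbb{E}[\rho(W_k)]$ would equal $T_\rho^{\,k}$ with $T_\rho:=\rho(H)\Pi_0$ and $\Pi_0$ the orthogonal projection onto the zero-weight subspace of $V_\rho$, and $T_\rho$ has spectral radius $<1$ for every non-trivial $\rho$: an eigenvector with modulus-one eigenvalue would have to lie in the zero-weight space and also be fixed by $\rho$ of the closed group generated by the diagonal torus $T$ and $HTH^{-1}$, which is all of $U(2)$ (its Lie algebra already contains $i\sigma_x$ and $i\sigma_z$, hence $i\sigma_y$), forcing the eigenvector to vanish. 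The arithmetic ingredient: the true phases are only weakly dependent because the frequencies $2^j$ are lacunary. Writing $\theta$ in binary digits $b_i$ (i.i.d.\ fair bits), $\omega^{2^j}$ is a fixed function of the shifted tail $(b_{j+1},b_{j+2},\dots)$; zeroing out these tails beyond a buffer lets one split $\{0,\dots,k-1\}$ into $\asymp k/L$ consecutive blocks of length $L=L(k)\to\infty$ whose slightly perturbed sub-products are genuinely independent, at a cost summable in the number of blocks. Each block then contributes, up to that controlled error, a factor of norm $\le C_\rho r_\rho^{\,L}$ with $r_\rho<1$ (from $\|T_\rho^{\,L}\|$), and multiplying $\asymp k/L$ such factors and letting $k\to\infty$ forces $\mathbb{E}[\rho(W_k)]\to 0$. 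Assembling the uniform-disc moment computation is then routine.

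I expect the decoupling to be the main obstacle: upgrading ``weak dependence'' into an honest bound for a product of non-commuting, non-self-adjoint random matrices — the non-commutative analogue of the Salem--Zygmund central limit theorem advertised in the abstract. Concretely one must choose the buffer widths and the block length $L(k)=o(k)$ so that the accumulated decoupling error, of order roughly $(k/L)\,\varepsilon(L)$, tends to $0$, which demands a careful accounting of how the high-frequency digit tails discarded at the block boundaries feed back into the resonances $\sum_j 2^j p_j=0$, and of the partial sums over a block (a multiple of $2$ to the block's starting exponent, of size below a constant times $2$ to the block's ending exponent). The spectral-radius input for a fixed $\rho$ is by contrast a finite computation; only to obtain quantitative rates would one need to track the growth of $r_\rho$ and $C_\rho$ with $\dim\rho$, which is where the specific arithmetic of $H$ and of the doubling map $\omega\mapsto\omega^2$ would enter essentially.
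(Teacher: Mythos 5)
Your reduction to Peter--Weyl and your observation that the stochastic matrix $\rho(W_k)=\rho(H)\Lambda_{k-1}\cdots\rho(H)\Lambda_0$ with the resonance constraint $\sum_j 2^j q_{p_j}=0$ is the object to control are correct, and the Lie-algebra argument that $H$ together with the torus generates a dense subgroup of $U(2)$ is sound. But the central technical step in your sketch --- the ``arithmetic ingredient'' that blocks of consecutive indices can be decoupled into near-independent pieces, each contributing a factor $\approx T_\rho^L$ --- is not merely an unfinished detail; it is the wrong mechanism. The constraint $\sum_j 2^j q_{p_j}=0$ is a single global relation, not a conjunction of local conditions $q_{p_j}=0$, so the true transfer operator must track the carry $\sum_{j<m}2^j q_{p_j}$ propagated from block to block. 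For $\ell\ge 2$ there are genuine resonances with some $q_{p_j}\ne 0$ (e.g.\ $q_0=2,\,q_1=-1$), so $\mathbb{E}[\rho(W_k)]\ne T_\rho^k$ and the discrepancy is structural, not a small perturbation. The digit-splitting itself also runs into trouble: the truncated phases $\hat\omega_j$ for consecutive blocks use overlapping bit ranges, and inserting gaps to force independence means either deleting non-identity unitaries from a non-commuting product (an $O(1)$ change) or living with dependence anyway. Most tellingly, the paper's own final section exhibits a function $f:\mathbb{R}/\mathbb{Z}\to\mathbb{Z}/2\mathbb{Z}$, uniformly distributed and aperiodic, for which $\mathbb{P}(f(2^k t)\cdots f(t)=0)=5/8$ for all $k\ge 1$: the ``lacunary $\Rightarrow$ effectively i.i.d.'' heuristic that your decoupling relies on is simply false in general, so any correct argument must exploit a specific feature of $g(\omega)$ that your sketch does not identify.

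The paper's route is therefore quite different in its core. It first proves equidistribution in $SU(2)$ (Theorem~\ref{thm:matrix_product1}) and separately proves asymptotic independence of $\omega^{2^{k+1}-1}$ from the $SU(2)$-valued product (Lemma~\ref{lem:independence}), then combines them to get Haar on $U(2)$; and in each step it builds, for each irreducible $t^\ell$, the exact transfer operator $S_\ell$ acting on $(2\ell+1)$-tuples of Laurent polynomials supported on $\{\omega^{-(\ell-1)},\dots,\omega^{\ell-1}\}$ --- this finite-dimensional space is precisely the ``carry'' bookkeeping that your $T_\rho=\rho(H)\Pi_0$ omits --- and proves $\rho(S_\ell)<1$ by a direct spectral argument: unitarity of $\tau^\ell$ gives $\rho(S_\ell)\le 1$, and an eigenvector with $|c|=1$ is ruled out by a chain of linear relations resting on a polynomial-divisibility property of $\tau^\ell$ (Propositions~\ref{prop:special_matrix} and~\ref{prop:special_matrix2}). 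To salvage your plan you would have to either construct and analyze this genuine transfer operator (at which point you have reproduced the paper) or find a qualitatively new reason why the resonant paths with $\sum_j 2^j q_{p_j}=0$ and some $q_{p_j}\ne 0$ have negligible total weight; the lacunarity of the frequencies alone is not that reason.
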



Our proofs of both results proceed by characterizing the distribution of a product of weakly dependent random matrices. Note that from the inductive definition \eqref{eq:RS1} and \eqref{eq:RS2}, we have
$$
\begin{pmatrix} P_k(z) \\ Q_k(z) \end{pmatrix} = \begin{pmatrix} 1 & z^{2^k} \\ 1 & - z^{2^k} \end{pmatrix} \begin{pmatrix} 1 & z^{2^{k-1}} \\ 1 & -z^{2^{k-1}} \end{pmatrix} \cdots \begin{pmatrix} 1 & z \\ 1 & -z \end{pmatrix} \begin{pmatrix} 1 \\ 0 \end{pmatrix}.
$$
We will work with a normalized version of the matrices above. Define
$$
g(z):= \frac{1}{\sqrt{2}} \begin{pmatrix} i z^{-1} & i z \\ i z^{-1} & - iz \end{pmatrix},
$$
which is an element of $SU(2)$ for $|z|=1$. We have
\begin{equation}
\label{link}
(i^{k+1} z^{2^{k+1}-1})\begin{pmatrix} P_k(z^2)/\sqrt{2^{k+1}} \\ Q_k(z^2)/\sqrt{2^{k+1}} \end{pmatrix} = g(z^{2^k})\cdots g(z) \begin{pmatrix} 1 \\ 0 \end{pmatrix}.
\end{equation}
We note $P_k(\omega)$ has the same distribution as $P_k(\omega^2)$, so Saffari's conjecture
will directly follow from
\begin{thm}[Equidistribution in $SU(2)$]
\label{thm:matrix_product1}
As $k\rightarrow\infty$, the distribution of $g(\omega^{2^k})\cdots g(\omega)$ tends to Haar measure on $SU(2)$.	
\end{thm}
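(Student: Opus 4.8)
The plan is to show that all nontrivial matrix coefficients of irreducible representations of $SU(2)$, integrated against the law of the random product $M_k := g(\omega^{2^k})\cdots g(\omega)$, tend to $0$; by Peter--Weyl this forces weak convergence to Haar measure. Fix an irreducible representation $\rho_n$ of dimension $n+1$ (the $n$-th symmetric power of the standard representation). We must estimate $\mathbb{E}\, \rho_n(M_k) = \mathbb{E}\, \rho_n(g(\omega^{2^k}))\cdots \rho_n(g(\omega))$, a matrix-valued object, and show its operator norm goes to $0$ as $k\to\infty$ for every fixed $n\geq 1$. The first step is to record that $\rho_n\big(g(z)\big)$ is a matrix whose entries are (after the normalization built into $g$) Laurent polynomials in $z$ with bounded degree, so each Fourier mode $\omega^j$ appearing in the product $\rho_n(g(\omega^{2^k}))\cdots\rho_n(g(\omega))$ comes from choosing, at each level $\ell = 0,\dots,k$, one of finitely many frequency contributions scaled by $2^\ell$.

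The key mechanism is lacunarity: because the frequencies at successive levels differ by factors of $2$, a product of such modes integrates (over $\omega$ uniform on the circle) to zero \emph{unless} the frequencies cancel in a very rigid, base-$2$ way. Concretely, $\mathbb{E}_\omega \big[\omega^{\sum_\ell a_\ell 2^\ell}\big] = 0$ unless $\sum_\ell a_\ell 2^\ell = 0$, and with the $a_\ell$ drawn from a fixed bounded set this happens only for special strings — in particular, for $n=1$ (the defining representation) one checks directly that the surviving terms are governed by a finite-state recursion on the carries, and a transfer-matrix / spectral-gap argument shows the total surviving contribution decays geometrically in $k$. I would set this up as an explicit transfer operator $T_n$ acting on a finite-dimensional space indexed by "carry states" together with the representation indices, so that $\mathbb{E}\,\rho_n(M_k)$ is (a projection of) $T_n^{k+1}$ applied to an initial vector; the claim then reduces to showing the relevant block of $T_n$ has spectral radius strictly less than $1$ for each $n\geq 1$.

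The main obstacle — and where the representation theory of $SU(2)$ earns its keep — is proving that spectral radius bound \emph{uniformly enough to handle all $n$ at once}, or at least for each fixed $n$ with control that does not degrade too fast. For a single fixed $n$ one can hope to verify the strict inequality by a compactness/positivity argument: the transfer operator is a contraction (its building blocks $\rho_n(g(z))$ are unitary, so no eigenvalue exceeds $1$ in modulus), and one must rule out eigenvalue $1$, which would correspond to a genuinely invariant vector and hence — tracing back through Peter--Weyl — to the law of $M_k$ \emph{not} spreading out. The cleanest route is probably to argue by contradiction: if $\|\mathbb{E}\,\rho_n(M_k)\|$ did not tend to $0$, one extracts a weak limit point $\mu$ of the laws of $M_k$ which is invariant under a suitable shift/averaging operation forced by the self-similar structure of the product, and then show the only such $\mu$ is Haar measure — here one uses that $SU(2)$ is connected and that the "randomizing" elements $g(z)$, as $z$ ranges over the circle, generate a set whose closure is all of $SU(2)$, so no proper closed subgroup can support the limit.

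I would therefore organize the proof as: (i) reduce to showing $\mathbb{E}\,\rho_n(M_k)\to 0$ for each $n\geq 1$ via Peter--Weyl; (ii) expand the product in Fourier modes and use lacunarity to identify the surviving terms with iterates of a finite transfer operator $T_n$; (iii) prove $\|T_n^k\|\to 0$ on the relevant subspace, either by an explicit spectral estimate for small $n$ and an inductive/recursive structure for larger $n$, or by the soft dynamical argument above ruling out an invariant vector; and (iv) conclude. The genuinely hard step is (iii); everything else is bookkeeping with the $SU(2)$ characters and the base-$2$ arithmetic of the exponents.
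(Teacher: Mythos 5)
Your overall architecture matches the paper's: reduce via Peter--Weyl to showing $\mathbb{E}\,\rho_n(M_k)\to 0$ for every nontrivial irreducible $\rho_n$, recast that expectation as iterates of a finite-dimensional transfer operator by exploiting the base-$2$ structure of the exponents, and note that unitarity of the blocks gives spectral radius $\leq 1$ for free. This is exactly the framework of Section~2 (where the transfer operator is called $S_\ell$, acting on vectors whose coordinates are Laurent polynomials of degree at most $\ell-1$). You correctly single out step (iii) --- excluding an eigenvalue of modulus~$1$ --- as the genuinely hard step, but neither of your two proposed routes to it is sound as written.

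The ``soft dynamical'' route would fail. To extract a weak limit point $\mu$ of the laws of $M_k$ that is invariant under a convolution/averaging operation, and then invoke a Kawada--It\^o-type classification, you would need a recursion of the form $\mu_{k+1} = \nu * \mu_k$ with a fixed step measure $\nu$, or at least asymptotic independence of the factors. But $M_{k+1}(\omega) = M_k(\omega^2)\,g(\omega)$, and the two factors are both deterministic functions of $\omega$; since $g(-z) = -g(z)$, the value $g(\omega)$ is determined up to sign by $\omega^2$, so the factors are strongly correlated, and no convolution identity for the laws is available. Asymptotic independence of the product from a ``last coordinate'' is itself essentially the content of Lemma~\ref{lem:independence}, which the paper proves \emph{after} Theorem~\ref{thm:matrix_product1} and by the same hard method, not before it. The paper warns against exactly this shortcut in the introduction (``it is not by a comparison to a product of i.i.d.\ variables that we prove...'').

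The ``explicit spectral estimate'' route is where the paper actually goes, but your sketch omits the substance. The paper first shows (Proposition~\ref{prop:trivial_bound_and_if_}) that an eigenvalue $c$ with $|c|=1$ would force a nonzero Laurent polynomial vector $A\in (P_\ell)^{2\ell+1}$ with $T^\ell(\omega)A(\omega) = cA(\omega^2)$ --- this uses the unitarity of $T^\ell(\omega)$ in an essential way, via a Parseval-type identity separating even and odd frequencies. It then analyzes the resulting homogeneous linear system coefficient-by-coefficient, using three concrete properties of the fixed unitary matrix $\tau^\ell = t^\ell(g(1))$ (Proposition~\ref{prop:special_matrix}): invertibility, $|\tau^\ell_{00}|<1$, and a nondegeneracy statement (part~c) whose proof is a polynomial-divisibility argument with $(z\pm 1)^{\ell+1}$. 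These are used to zero out the skew-diagonals and then the columns of the coefficient array one at a time, collapsing to the single central entry, which is killed by $|\tau^\ell_{00}|<1$. Your ``finite-state recursion on carries'' and ``inductive/recursive structure for larger $n$'' are gestures toward this, but without an input of comparable specificity about the representing matrices, the spectral gap is not actually established, and the proof is incomplete at precisely the point you flagged as the crux.
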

Indeed, this shows that the distribution of $|P_k(\omega)/\sqrt{2^{k+1}}|^2$ tends to that of
$$
\Big| \Big\langle \begin{pmatrix} 1 \\ 0 \end{pmatrix}, g \begin{pmatrix} 1 \\ 0 \end{pmatrix} \Big\rangle \Big|^2,
$$
for $g\in SU(2)$ chosen according to Haar measure. We leave it to the reader to verify that this implies \eqref{eq:saffari_dist} (using for instance an Euler angle parameterization of $g$, see e.g. \cite[Prop. 7.4.1]{Fa}).

Returning our attention to the representation \eqref{link}, we shall also show
\begin{lem}
\label{lem:independence}
The random variables $\omega^{2^{k+1}-1}$ and $g(\omega^{2^k})\cdots g(\omega)$ become independent as $k\rightarrow\infty$. 
\end{lem}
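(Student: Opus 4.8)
The plan is to read Lemma~\ref{lem:independence} as an equidistribution statement on the compact group $S^1\times SU(2)$, of exactly the same shape as Theorem~\ref{thm:matrix_product1}. Put $\hat g\colon S^1\to S^1\times SU(2)$, $\hat g(z):=(z,g(z))$, with $S^1\times SU(2)$ carrying the componentwise product. Since $z^{2^{k+1}-1}=\prod_{j=0}^k z^{2^j}$, the product telescopes:
$$
\hat g(\omega^{2^k})\,\hat g(\omega^{2^{k-1}})\cdots\hat g(\omega)=\Big(\omega^{2^{k+1}-1},\ g(\omega^{2^k})\cdots g(\omega)\Big).
$$
Because Haar measure on a product group is the product of the Haar measures, convergence of the left-hand side to Haar on $S^1\times SU(2)$ is equivalent to the conjunction: $\omega^{2^{k+1}-1}$ tends to the uniform law (immediate, as $2^{k+1}-1\neq 0$), $g(\omega^{2^k})\cdots g(\omega)$ tends to Haar on $SU(2)$ (Theorem~\ref{thm:matrix_product1}), and the two become independent---which is the Lemma. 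So it suffices to prove that $\hat g(\omega^{2^k})\cdots\hat g(\omega)$ equidistributes on $S^1\times SU(2)$.

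By Peter--Weyl, the irreducible unitary representations of $S^1\times SU(2)$ are $\chi_n\boxtimes\rho_m$ with $n\in\mathbb Z$ and $\rho_m$ the $(m+1)$-dimensional irreducible of $SU(2)$, so equidistribution amounts to
$$
\mathbb{E}\,\big[\,\omega^{n(2^{k+1}-1)}\,\rho_m\big(g(\omega^{2^k})\cdots g(\omega)\big)_{pq}\,\big]\longrightarrow 0\qquad(k\to\infty)
$$
for every $(n,m)\neq(0,0)$ and every entry. For $n=0$, $m\geq1$ this is Theorem~\ref{thm:matrix_product1}; for $m=0$, $n\neq0$ it is the identity $\mathbb{E}\,\omega^{n(2^{k+1}-1)}=0$. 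For the remaining case $n\neq0$, $m\geq1$, apply the \emph{representation} $\chi_n\boxtimes\rho_m$ factor by factor: the quantity above is the $(p,q)$ entry of the lacunary product of \emph{unitary} matrices
$$
G_{n,m}(\omega^{2^k})\,G_{n,m}(\omega^{2^{k-1}})\cdots G_{n,m}(\omega),\qquad G_{n,m}(z):=z^n\rho_m(g(z)),
$$
which is a product of exactly the type analysed for Theorem~\ref{thm:matrix_product1} (there with $g$ itself). Writing $g(z)=i\,H\,d(z)$ with $H=\tfrac1{\sqrt2}\big(\begin{smallmatrix}1&1\\1&-1\end{smallmatrix}\big)$ and $d(z)=\mathrm{diag}(z^{-1},z)$ makes $G_{n,m}$ explicit: $G_{n,m}(z)=W\,\mathrm{diag}(z^{n-m},z^{n-m+2},\dots,z^{n+m})$ with $W:=i^m\rho_m(H)$ a fixed unitary.

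The remaining step is to rerun the argument behind Theorem~\ref{thm:matrix_product1}---the non-commutative analogue of the Salem--Zygmund method---with $G_{n,m}$ in place of $g$. Two reductions make the structure visible. Expanding into Fourier modes of $\omega$,
$$
\mathbb{E}\,\big[G_{n,m}(\omega^{2^k})\cdots G_{n,m}(\omega)\big]_{pq}=\sum_{\vec a\,:\ \sum_{j=0}^k a_j2^j=0}c_{\vec a},\qquad a_j\in\{n-m,n-m+2,\dots,n+m\},
$$
with coefficients $c_{\vec a}$ bounded by products of entries of $W$. Since $\sum_j a_j2^j$ lies in $[(n-m)(2^{k+1}-1),(n+m)(2^{k+1}-1)]$ and has the fixed parity of $n-m$, the index set is \emph{empty}---so the expectation is exactly $0$---unless $1\leq|n|\leq m$ and $n\equiv m\pmod 2$. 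In that last range one must show that the finitely many surviving coefficients $c_{\vec a}$ decay (exponentially) in $k$; this is precisely what the proof of Theorem~\ref{thm:matrix_product1} supplies, the decay coming from the contraction produced by averaging the natural transfer operator over the two square roots at each lacunary scale---the same mechanism that forces $\mathbb{E}\,\rho_m(g(\omega^{2^k})\cdots g(\omega))_{pq}\to0$ there.

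I expect this last point to be the main obstacle: confirming that the twist $z^n$ with $n\neq0$ does not open a new ``invariant'' direction in $\mathbb{C}^{m+1}$ along which a matrix coefficient of $G_{n,m}(\omega^{2^k})\cdots G_{n,m}(\omega)$ fails to vanish---equivalently, that the transfer operator attached to $G_{n,m}$ has $0$ as its only fixed function, just as the one attached to $\rho_m$ ($m\geq1$) does. Heuristically this is clear, since the weights $z^{n-m},\dots,z^{n+m}$ are distinct and, once shifted off $0$ by $n$, cannot be simultaneously neutralised along any line; but turning this into a quantitative estimate, uniformly in $n$, is where the lacunarity (the gaps between consecutive $2^j$) genuinely has to be used, exactly as in Theorem~\ref{thm:matrix_product1}.
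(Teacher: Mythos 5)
Your framing of the lemma as a single equidistribution statement on the product group $S^1\times SU(2)$ is exactly right and is, modulo bookkeeping, equivalent to the paper's: the irreducible representations of $S^1\times SU(2)$ are the products $\chi_\lambda\boxtimes t^\ell$, so showing $\mathbb{E}\,\omega^{\lambda(2^{k+1}-1)}\,t^\ell(g(\omega^{2^k})\cdots g(\omega))\to 0$ for all $(\lambda,\ell)\neq(0,0)$ is precisely the paper's equation \eqref{eq:ind_moments}. Absorbing the $\omega^\lambda$ into the matrix (your $G_{n,m}$, the paper's $\omega^{\lambda/2}T^\ell(\omega)$) and noting the parity constraint is also the same reduction the paper makes with its Cases 1 and 2.

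However, you have identified, but not closed, the main gap. The assertion that the surviving coefficients decay ``is precisely what the proof of Theorem~\ref{thm:matrix_product1} supplies'' is not quite true as stated: the twist $z^n$ changes both the allowed polynomial space and the combinatorics of which linear equations among the coefficients are available, so the spectral-radius bound $\rho < 1$ for the twisted transfer operator must be re-established, not inherited. The paper does this in Section~\ref{sec:3}: it proves an analogue (Proposition~\ref{prop:special_matrix2}) of Proposition~\ref{prop:special_matrix} adapted to the relevant (half-integer) representations, introduces the shifted space $P_{\ell,\lambda}$ and the operator $S_{\ell,\lambda}$, and reruns the skew-diagonal elimination. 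Your final paragraph acknowledges this is ``heuristically clear'' but supplies only that heuristic; the quantitative fixed-point argument is exactly what needs to be verified and is where the actual work lies.

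A secondary inaccuracy: the index set you describe is not empty when $|n|=m$ (and $n\equiv m\pmod 2$). In that case it is a singleton ($\vec a=0$), so the expectation is not identically $0$; the decay there instead comes from the boundedness $|\tau^\ell_{-\ell,-\ell}|<1$ (resp.\ $|\tau^\ell_{\ell,\ell}|<1$), which is a separate computation the paper carries out as cases 2) and 3) in Section~\ref{sec:3}. So you need to handle the boundary $|n|=m$, the interior $|n|<m$, and the exterior $|n|>m$ as genuinely distinct cases, not as a uniform ``empty unless interior'' trichotomy.
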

As a consequence we immediately obtain Montgomery's conjecture and more generally
\begin{thm}[Equidistribution in $U(2)$]
\label{thm:matrix_product2}
Let
$$
G(\omega) := \frac{1}{\sqrt{2}} \begin{pmatrix} 1 & \omega \\ 1 & -\omega \end{pmatrix}.
$$
As $k\rightarrow\infty$, the distribution of $G(\omega^{2^k})\cdots G(\omega)$ tends to Haar measure on $U(2)$.
\end{thm}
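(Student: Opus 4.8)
The plan is to deduce Theorem~\ref{thm:matrix_product2} from Theorem~\ref{thm:matrix_product1} and Lemma~\ref{lem:independence} by a purely algebraic rearrangement, so that no new analytic input is needed. The key elementary identity, checked directly from the definitions, is that $g$ and $G$ agree up to a diagonal scalar:
\[
g(z) = i z^{-1} G(z^2), \qquad |z| = 1.
\]
Multiplying this over $z = \omega, \omega^2, \dots, \omega^{2^k}$ and telescoping the product of scalars $\prod_{j=0}^k i\omega^{-2^j} = i^{k+1}\omega^{-(2^{k+1}-1)}$, one obtains
\[
g(\omega^{2^k})\cdots g(\omega) = i^{k+1}\,\omega^{-(2^{k+1}-1)}\, G(\omega^{2^{k+1}})\cdots G(\omega^2),
\]
equivalently
\[
G(\omega^{2^{k+1}})\cdots G(\omega^2) = \bigl(i^{-(k+1)}\omega^{2^{k+1}-1}\bigr)\cdot\bigl(g(\omega^{2^k})\cdots g(\omega)\bigr),
\]
a product of a unit scalar with an element of $SU(2)$.

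Next I would pass to distributions. Since $\omega^2$ is Haar-distributed on the circle whenever $\omega$ is, the left-hand side has the same law as $G(\omega^{2^k})\cdots G(\omega)$, the object of interest. On the right, set $\zeta_k := \omega^{2^{k+1}-1}$ (which is exactly uniform on $U(1)$ for every $k$, since $2^{k+1}-1\neq 0$) and $S_k := g(\omega^{2^k})\cdots g(\omega)$. Theorem~\ref{thm:matrix_product1} gives $S_k \to \mathrm{Haar}(SU(2))$ in distribution, Lemma~\ref{lem:independence} gives that $\zeta_k$ and $S_k$ become asymptotically independent, and the deterministic factor $i^{-(k+1)}$ is absorbed into $\zeta_k$ by rotation-invariance of the uniform measure. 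A routine weak-convergence argument --- testing against functions of the form $F(\zeta)H(S)$ and invoking Stone--Weierstrass on $C(U(1)\times SU(2))$ --- then shows that $(i^{-(k+1)}\zeta_k, S_k)$ converges in law to a pair $(\zeta, h)$ with $\zeta$ uniform on $U(1)$, $h$ Haar on $SU(2)$, and the two independent; applying the jointly continuous map $(\zeta,h)\mapsto \zeta h$ shows $G(\omega^{2^k})\cdots G(\omega)$ converges in distribution to the law of $\zeta h$.

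Finally I would identify this limiting law. The map $\mu\colon U(1)\times SU(2)\to U(2)$, $(\zeta,h)\mapsto \zeta h$, is a continuous surjective homomorphism (surjective since any $u\in U(2)$ equals $(\det u)^{1/2}\cdot\bigl((\det u)^{-1/2}u\bigr)$ with the second factor in $SU(2)$), so the pushforward $\mu_*\bigl(\mathrm{Haar}(U(1))\times\mathrm{Haar}(SU(2))\bigr)$ is a left-invariant probability measure on $U(2)$, hence equals $\mathrm{Haar}(U(2))$ by uniqueness of Haar measure. Thus $\zeta h$ is Haar-distributed on $U(2)$, which completes the proof; specializing, $G(\omega^{2^k})\cdots G(\omega)\binom{1}{0}$ becomes uniformly distributed in the disc, recovering Montgomery's conjecture. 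I do not anticipate a genuine obstacle here: the argument merely repackages the two quoted results, and the only point needing care is the implication ``$S_k$ converges and $(\zeta_k,S_k)$ asymptotically independent $\Rightarrow$ the scaled product converges in law,'' which is the standard weak-convergence lemma indicated above.
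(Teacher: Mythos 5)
Your proposal is correct and takes essentially the same route as the paper, which states that Theorem~\ref{thm:matrix_product2} follows ``immediately'' from Lemma~\ref{lem:independence} together with Theorem~\ref{thm:matrix_product1} via the scalar relation $g(z)=iz^{-1}G(z^2)$ implicit in \eqref{link}; you have simply written out the telescoping of the phases, the change of variable $\omega\mapsto\omega^2$, the weak-convergence step, and the identification of the pushforward measure under $(\zeta,h)\mapsto\zeta h$ as Haar on $U(2)$, all of which the paper leaves to the reader.
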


Montgomery's conjecture is thus verified by noting the distribution of $P_k(\omega)/\sqrt{2^{k+1}}$ tends to that of
$$
\Big\langle \begin{pmatrix} 1 \\ 0 \end{pmatrix}, G \begin{pmatrix} 1 \\ 0 \end{pmatrix} \Big\rangle,
$$
where $G \in U(2)$ is chosen according to Haar measure. This may again be analyzed using Euler angles. Alternatively, perhaps more simply, Montgomery's conjecture follows from the Lemma and Theorem \ref{thm:saffari} by noting that the distribution of $P_k(\omega)/\sqrt{2^{k+1}}$ tends to that of $\zeta \cdot |P_k(\omega)/\sqrt{2^{k+1}}|$ where $\zeta$ is an independent random variable uniformly distributed on the unit circle.\footnote{This may be compared to the results proved in \cite{Er2} for the class of \emph{flat polynomials}.}

The moral of Theorems \ref{thm:matrix_product1} and \ref{thm:matrix_product2} should be clear; it is that the sequence of random variables $\omega, \omega^2, ..., \omega^{2^k}$ resemble i.i.d. variables $\omega_0, \omega_1, ..., \omega_k$ and therefore the matrix product in Theorem \ref{thm:matrix_product1} for instance has a distribution close to that of $g(\omega_0)\cdots g(\omega_k)$. But this latter matrix product is just a random walk on the compact group $SU(2)$, which is known to equidistribute unless certain obvious obstacles (support on a proper closed subgroup or support on a coset of a proper closed subgroup \cite{KaIt}) arise, which in this case they do not. These theorems may thus be thought of as a non-commutative version of the well-known central limit theorem of Salem and Zygmund for lacunary trigonometric series \cite{SaZy}.

There is a sense in which this analogy to statistical independence may be misleading and this is discussed in the final section of this paper. In any case it is not by a comparison to a product of i.i.d. variables that we prove Theorem \ref{thm:matrix_product1} and Lemma \ref{lem:independence}. Instead we return to earlier principles and make use of the representation theory of $SU(2)$.

We note that very recently and independently, D. Zeilberger  \cite{Ze} has sketched a different possible approach to Saffari's conjecture from the one we take. Zeilberger's sketch is contingent on the algorithmic proof of certain identities which have been found empirically and also a claim called \emph{the small change hypothesis} which is used to bound error terms. This hypothesis does not necessarily follow algorithmically at the present moment, but it might after an explicit identification of certain terms that arise in his sketch. \cite{Ze} also computes the asymptotics of moments $\mathbb{E} \,[\overline{P_k(\omega)}]^n [P_k(\omega)]^m$ for small $n$ and $m$, with an output consistent with Montgomery's conjecture.

\section{Equidistribution in $SU(2)$: a proof of Theorem \ref{thm:matrix_product1}}
\label{sec:2}

This section is devoted to a proof of Theorem \ref{thm:matrix_product1} regarding equidistribution in $SU(2)$. Our proof of Lemma \ref{lem:independence} and thus Theorem \ref{thm:matrix_product2}, which generalizes the result to $U(2)$, uses similar methods but adds an additional layer of complexity and will come in the next section.

Our approach will be to demonstrate for every nontrivial irreducible representation $\pi$ of $SU(2)$ that\footnote{By \eqref{irreps_to_0} we mean that the expectation of all matrix coefficients of the representations tend to $0$; that is for any nontrivial irreducible representation $(\pi, \mathbb{C}^n)$, we have $\mathbb{E}\,\langle u, \pi(g(\omega^{2^k}))\cdots \pi(g(\omega))v\rangle \rightarrow 0$ for all vectors $u, v \in \mathbb{C}^n$.}
\begin{equation}
\label{irreps_to_0}
\mathbb{E}\, \pi(g(\omega^{2^k}) \cdots g(\omega) )= \mathbb{E}\,\pi(g(\omega^{2^k}))\cdots \pi(g(\omega)) \rightarrow 0.
\end{equation}
By a well-known criterion, this is necessary and sufficient to prove the theorem. Indeed, in general we have
\begin{thm}
\label{Levy_generalization}
For $H$ a compact Lie group and $h_1, h_2, h_3,...$ a sequence of $H$-valued random elements, the distributions of $h_k$ tend to Haar measure if and only if for every nontrivial irreducible representation $\pi$ of $H$,
\begin{equation}
\label{exp_to_zero}
\mathbb{E}\, \pi(h_k) \rightarrow 0.
\end{equation}
\end{thm}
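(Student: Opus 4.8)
The plan is to recognize this as the compact-group incarnation of Lévy's continuity theorem and to prove it from the Peter--Weyl theorem together with the Riesz representation theorem. Throughout, let $m$ denote Haar probability measure on $H$ and $\mu_k$ the law of $h_k$; since $H$ is a compact metric space, each $\mu_k$ may be regarded as a norm-one positive linear functional $f \mapsto \int_H f\,d\mu_k$ on $C(H)$, and ``$\mu_k \to m$'' means weak-$*$ convergence of these functionals. We may also take every irreducible $\pi$ to be unitary, so that the matrix entries of $\pi(h_k)$ are bounded random variables and $\mathbb{E}\,\pi(h_k)$ is well defined.

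For the forward implication I would argue as follows. If $\mu_k \to m$, then for any nontrivial irreducible $(\pi,\mathbb{C}^n)$ and any $u,v \in \mathbb{C}^n$ the function $g \mapsto \langle u, \pi(g)v\rangle$ is continuous, whence $\mathbb{E}\,\langle u, \pi(h_k) v\rangle \to \int_H \langle u, \pi(g)v\rangle\,dm(g)$. The operator $\int_H \pi(g)\,dm(g)$ commutes with $\pi(h)$ for every $h \in H$ and equals the orthogonal projection of $\mathbb{C}^n$ onto the subspace of $\pi$-fixed vectors; by Schur's lemma this subspace is $\{0\}$ since $\pi$ is irreducible and nontrivial, so $\mathbb{E}\,\pi(h_k) \to 0$.

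For the reverse implication --- the substantive one --- the idea is that the hypothesis pins down $\lim_k \int_H f\,d\mu_k$ on a dense set of test functions and uniform boundedness does the rest. By assumption $\int_H f\,d\mu_k$ converges (to $0$) whenever $f$ is a matrix coefficient of a nontrivial irreducible representation, and also when $f \equiv 1$, the matrix coefficient of the trivial representation (the limit being $1 = \int_H 1\,dm$). By the Peter--Weyl theorem the linear span $\mathcal{A}$ of the matrix coefficients of all irreducible representations of $H$ --- the trivial one included --- is uniformly dense in $C(H)$. Since the functionals $\mu_k$ have norm $1$, a routine $3\varepsilon$ argument upgrades convergence of $\int_H f\,d\mu_k$ from $f \in \mathcal{A}$ to all $f \in C(H)$; call the limit $\Lambda(f)$. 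Then $\Lambda$ is positive with $\Lambda(1)=1$, so by the Riesz representation theorem $\Lambda(f)=\int_H f\,d\mu$ for some Borel probability measure $\mu$ on $H$. By construction $\int_H f\,d\mu = \int_H f\,dm$ for every $f \in \mathcal{A}$, hence for all $f \in C(H)$ by density, so $\mu = m$ and $\mu_k \to m$.

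I do not anticipate a genuine obstacle: the only non-soft inputs are the vanishing of $\int_H \pi\,dm$ for nontrivial irreducible $\pi$ (immediate from Schur's lemma) and the Peter--Weyl density theorem, both entirely standard for compact Lie groups; the remainder is the Riesz representation theorem and a density argument. The one point meriting care is purely bookkeeping --- ensuring $\mathbb{E}\,\pi(h_k)$ is meaningful, which it is once $\pi$ is taken unitary so that $\pi(h_k)$ has uniformly bounded entries.
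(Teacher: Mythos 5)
Your proof is correct. Worth noting: the paper itself supplies no proof of this theorem --- it simply cites Theorem 4.2.5 of Applebaum's \emph{Probability on compact Lie groups} together with Example 2 of \S 4.2 there, so there is no ``paper's own argument'' to compare against line by line. Your argument is the standard self-contained one for this fact: the forward direction from weak-$*$ convergence plus the Schur's-lemma observation that $\int_H \pi\,dm = 0$ for nontrivial irreducible $\pi$, and the reverse direction from the Peter--Weyl density of the span of matrix coefficients in $C(H)$, the uniform bound $\|\mu_k\| = 1$ to extend convergence of $\int f\,d\mu_k$ from that dense subalgebra to all of $C(H)$, and the Riesz representation theorem to identify the limit functional as a probability measure agreeing with $m$ on a dense set. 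All of these steps are correctly handled, including the two points that require a little care --- positivity and normalization of the limit functional (so Riesz really does produce a probability measure), and the a priori existence of the limit via a $3\varepsilon$ argument rather than assuming it. This is in the same spirit as the reference the paper cites, merely written out explicitly; it is the expected and appropriate argument.
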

Theorem \ref{Levy_generalization} is a corollary of Theorem 4.2.5 of \cite{Ap}. Theorem 4.2.5 there is a slightly more general result characterizing arbitrary limiting distributions; the case that the limiting distribution of the random elements is Haar measure follows from Example 2 of section 4.2 in the same source.

This approach of demonstrating \eqref{irreps_to_0} we note, is very much just a variant of the moment method in this context, and fortunately the representation theory of $SU(2)$ is elegant and well-understood (see, e.g. \cite{Fa}, \cite{Vi}, or \cite{AnAsRo}). We recall the necessary facts here.

Any matrix of $SU(2)$ has the form
$$
g = \begin{pmatrix} \alpha & \beta \\ - \overline{\beta} & \overline{\alpha} \end{pmatrix},
$$
with $|\alpha|^2 + |\beta|^2  = 1.$ Nontrivial irreducible representations of the matrix $g$ are parameterized by semi-integers $\ell = 1/2, 1, 3/2, 2, ...$ and consist of the matrices with entries
$$
t^\ell_{m,n}(g) = \sqrt{\frac{(\ell-m)! (\ell+m)!}{(\ell-n)!(\ell+n)!}} \int_\Gamma (\alpha z + \gamma)^{\ell-n} (\beta z + \delta)^{\ell+n} z^{m-\ell} \frac{dz}{2\pi i z}
$$
where $m,n \in \{ -\ell, -\ell+1,..., \ell\}$, and the contour $\Gamma$ is the unit circle. (Here the range of $m$ and $n$ is such that $\ell-n$ and $\ell+n$ are integers.) Note that
\begin{equation}
\label{eq:t_def}
t^\ell_{m,n}(g(\omega))  = \mathfrak{\tau}^\ell_{m,n}\, \omega^{2n}
\end{equation}
with
\begin{equation*}
\tau^\ell_{m,n}:= \sqrt{\frac{(\ell-m)! (\ell+m)!}{(\ell-n)!(\ell+n)!}} \frac{i^{2\ell}}{2^\ell} \int_\Gamma (z + 1)^{\ell-n} (z -1)^{\ell+n} z^{m-\ell} \frac{dz}{2\pi i z}.
\end{equation*}
Note that $\tau^\ell$ is itself a unitary matrix, corresponding to the representation of the matrix $g(1)$.

If $\ell = 1/2, 3/2, 5/2,...$ then it is transparent that 
$$
\mathbb{E}\, t^\ell(g(\omega^{2^k})) \cdots t^{\ell}(g(\omega)) = 0,
$$
for all $n\geq 0$, since every entry of the matrix product of which we are taking the expectation will be a Laurent polynomial in $\omega$ with only odd powers appearing.

On the other hand, for $\ell = 1,2,...$ nothing so simple is true. Each entry of $t^\ell(g(\omega^{2^k}))\cdots t^\ell(g(\omega))$ will be a Laurent polynomial in $\omega$ with exclusively even powers however, so at least we have
$$
\mathbb{E}\, t^\ell(g(\omega^{2^k}))\cdots t^\ell(g(\omega)) = \mathbb{E}\, t^\ell(g(\omega^{2^{k-1}}))\cdots t^\ell(g(\omega^{1/2})). 
$$
We use the notation:
$$
T^\ell(\omega) := \tau^\ell \begin{pmatrix} \omega^{-\ell} & & & \\ & \omega^{-\ell+1} & & & \\ & & \ddots & \\ & & & \omega^\ell \end{pmatrix} \quad \textrm{so that}\quad T^\ell(\omega)= t^\ell(g(\omega^{1/2})).
$$
In this notation, we must show $\mathbb{E}\, T^\ell(\omega^{2^k}) \cdots T^\ell(\omega) \rightarrow 0$ to prove the theorem; in turn to show this we need only show that all constant coefficients in $T^\ell(\omega^{2^k})\cdots T^\ell(\omega)$ go to $0$. To this end we note the following two facts:
\begin{enumerate}
	\item The matrix entries of $T^\ell(\omega^{2^k})\cdots T^\ell(\omega)$ will be Laurent polynomials in $\omega$ lying in the span of $\{ \omega^{(2^{k+1}-1)\ell}, ..., \omega^{-(2^{k+1}-1)\ell}\}.$
	
	\item To find a coefficient of $\omega^{\nu 2^{k+1}}$ for $\nu \in \mathbb{Z}$ (and in particular the constant coefficient) of a matrix entry of $$T^\ell(\omega^{2^k})\cdots T^\ell(\omega) = T^\ell(\omega^{2^k}) \times \Big(T^\ell(\omega^{2^{k-1}})\cdots T^\ell(\omega)\Big)$$ we need only keep track of the coefficients of $\omega^{\mu 2^k}$ for $\mu \in \mathbb{Z}$ in the entries of $T^\ell(\omega^{2^{k-1}})\cdots T^\ell(\omega).$
\end{enumerate}

We let $P$ be the space of Laurent polynomials in $\omega$ (with complex coefficients) and define an operator $S^\ell$ on the product space $P^{2\ell+1}$ as follows: for 
$$
A = \begin{bmatrix} A_{-\ell}(\omega), & \cdots & ,A_\ell(\omega) \end{bmatrix}^{T} \in P^{2\ell+1},
$$
if
$$
T^\ell(\omega) A = \begin{bmatrix} \sum_{j\in \mathbb{Z}} \beta_{-\ell}(j) \omega^j, & \cdots & ,\sum_{j \in \mathbb{Z}} \beta_\ell(j) \omega^j \end{bmatrix}^T
$$
where the coefficients $\beta_h(j)$are defined by this relation, we define
$$
S^\ell A := \begin{bmatrix} \sum_{j \in \mathbb{Z}} \beta_{-\ell}(2j) \omega^j, & \cdots & ,\sum_{j \in \mathbb{Z}} \beta_\ell(2j) \omega^j \end{bmatrix}^T.
$$

We note, in view of fact (ii) above, that for arbitrary $v \in \mathbb{C}^{2\ell+1}$
\begin{equation}
\label{eq:expect}
\mathbb{E}\, T^\ell(\omega^{2^k}) \cdots T^\ell(\omega) v = \mathbb{E}\, (S^\ell)^{k+1} v.
\end{equation}
Moreover, define the space of Laurent polynomials $P_\ell := \textrm{span}_\mathbb{C}\{ \omega^{-(\ell-1)},...,\omega^{\ell-1}\}$. Note that $S^\ell$ is a linear operator mapping the finite dimensional complex vector space $(P_\ell)^{2\ell+1}$ into itself. We let $S_\ell$ be the operator $S^\ell$ restricted to $(P_\ell)^{2\ell+1}$, and rewrite the vector in \eqref{eq:expect} as
\begin{equation}
\label{eq:expect2}
\mathbb{E}\, (S_\ell)^{k+1} v.
\end{equation}
If we show that for all $\ell$ the spectral radius of $S_\ell$ is less than $1$, then $(S_\ell)^{k+1}\rightarrow 0$ as $k\rightarrow\infty$, and \eqref{eq:expect2} likewise will tend to $0$ for all vectors $v$ which implies the theorem. We let $\rho(\cdot)$ denote the spectral radius of an operator; the remainder of this section is devoted to a proof that $\rho(S_\ell) < 1$.

We make the following observation:

\begin{prop}
\label{prop:trivial_bound_and_if_}
In the above notation, $\rho(S_\ell) \leq 1$. Moreover, if $\rho(S_\ell)=1$, then there must exist non-zero $A \in (P_\ell)^{2\ell+1}$ such that
$$
T^\ell(\omega) A(\omega) = c A(\omega^2).
$$
\end{prop}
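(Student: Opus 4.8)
The plan is to put the natural Hermitian structure on the finite–dimensional space $(P_\ell)^{2\ell+1}$ and to exhibit $S_\ell$ as a composition of a unitary operator with a contraction, the contraction being strict unless a rigid functional equation holds.

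For a $\mathbb{C}^{2\ell+1}$–valued Laurent polynomial $A(\omega)=[A_{-\ell}(\omega),\dots,A_\ell(\omega)]^T$ with $A_m(\omega)=\sum_{j} a_{m,j}\,\omega^{j}$, put $\|A\|^2:=\sum_m\sum_j |a_{m,j}|^2$, which by Parseval equals $\int_\Gamma \|A(\omega)\|_{\mathbb{C}^{2\ell+1}}^2\,\frac{d\omega}{2\pi i\omega}$. First I would record the decomposition $S^\ell = D\circ M$, where $M$ is multiplication by the matrix $T^\ell(\omega)$ and $D$ is the componentwise decimation $\sum_j c_j\omega^j\mapsto\sum_j c_{2j}\omega^j$; that this composition is exactly the operator $S^\ell$ is just a restatement of fact (ii) above. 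Two elementary facts then do the work. First, because $\tau^\ell$ is unitary and $\mathrm{diag}(\omega^{-\ell},\dots,\omega^\ell)$ is unitary for $|\omega|=1$, the matrix $T^\ell(\omega)$ is unitary on $\Gamma$; hence $\|T^\ell(\omega)A(\omega)\|_{\mathbb{C}^{2\ell+1}}=\|A(\omega)\|_{\mathbb{C}^{2\ell+1}}$ pointwise, so $M$ preserves $\|\cdot\|$. Second, $\|DF\|^2=\sum_j|c_{2j}|^2\le\sum_j|c_j|^2=\|F\|^2$, with equality if and only if $c_j=0$ for every odd $j$, i.e. $F(\omega)=G(\omega^2)$ for some Laurent polynomial $G$. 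Since $(P_\ell)^{2\ell+1}$ is invariant under $S^\ell$ by fact (i) and $S_\ell$ is the restriction, $\|S_\ell\|\le\|D\|\,\|M\|\le 1$, and therefore $\rho(S_\ell)\le\|S_\ell\|\le1$.

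Now suppose $\rho(S_\ell)=1$. As $S_\ell$ acts on a finite–dimensional complex vector space it has an eigenvalue $c$ with $|c|=\rho(S_\ell)=1$; fix a corresponding eigenvector $A\in(P_\ell)^{2\ell+1}$, $A\neq0$. Then
$$
\|A\| = \|cA\| = \|S_\ell A\| = \|D(MA)\| \le \|MA\| = \|A\|,
$$
so the inequality is an equality. Applying the equality clause of the second elementary fact to each component of $MA$, we conclude that every entry of $T^\ell(\omega)A(\omega)$ is a Laurent polynomial in $\omega^2$, say $T^\ell(\omega)A(\omega)=B(\omega^2)$. Unwinding the definition of $S^\ell=D\circ M$, this forces $S_\ell A=B$, and combining with $S_\ell A=cA$ gives $B=cA$, that is $T^\ell(\omega)A(\omega)=c\,A(\omega^2)$ with $|c|=1$.

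The only steps needing care are bookkeeping: verifying that $D\circ M$ reproduces $S^\ell$ precisely (a re-reading of fact (ii)), that $T^\ell(\omega)$ is pointwise unitary (immediate from the recalled facts, since $\tau^\ell$ is unitary), and that working inside the invariant finite–dimensional subspace $(P_\ell)^{2\ell+1}$ is what lets us upgrade ``$\rho(S_\ell)=1$'' to a genuine unimodular eigenvalue with eigenvector rather than a mere sequence of approximate eigenvectors. I do not expect any of these to be a real obstacle; the substantive work is postponed to the next stage, namely showing that the functional equation $T^\ell(\omega)A(\omega)=c\,A(\omega^2)$ admits no nonzero solution $A\in(P_\ell)^{2\ell+1}$.
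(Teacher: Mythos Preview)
Your proof is correct and is essentially the same argument as the paper's: both use the pointwise unitarity of $T^\ell(\omega)$ together with the $L^2$/Parseval norm and the even--odd split of $T^\ell(\omega)A(\omega)$ to see that any eigenvalue of $S_\ell$ has modulus at most $1$, with equality forcing the odd part to vanish. The only cosmetic difference is packaging: you phrase it as a factorization $S_\ell=D\circ M$ with $M$ isometric and $D$ a (coefficient-dropping) contraction and bound the operator norm, whereas the paper writes $T^\ell(\omega)A(\omega)=cA(\omega^2)+\omega B(\omega^2)$ directly and computes $\mathbb{E}\|A(\omega)\|^2=|c|^2\,\mathbb{E}\|A(\omega)\|^2+\mathbb{E}\|B(\omega)\|^2$ for an eigenvector.
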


\begin{proof}
	From the definition if $S_\ell$ has an eigenvalue $c$, there must exist non-zero $A \in (P_\ell)^{2\ell+1}$ and $B \in P^{2\ell+1}$ with
	$$
	T^\ell(\omega) A(\omega) = c A(\omega^2) + \omega B(\omega^2),
	$$
	by separating the polynomial vector on the left hand side into odd and even powers. But because $T^\ell(\omega)$ is unitary and $A(\omega^2)$ and $\omega B(\omega^2)$ have complementary powers, this implies that
	\begin{align}
	\label{eigen_identity}
	\notag \mathbb{E}\, \|A(\omega)\|_{\ell^2}^2 &= |c|^2 \cdot \mathbb{E} \,\|A(\omega^2)\|_{\ell^2}^2 + \mathbb{E}\,\|B(\omega^2)\|_{\ell^2}^2 \\
	&= |c|^2 \cdot \mathbb{E} \,\|A(\omega)\|_{\ell^2}^2 + \mathbb{E}\,\|B(\omega)\|_{\ell^2}^2.
	\end{align}
	This obviously implies $|c| \leq 1$, so $\rho(S_\ell) \leq 1$. Moreover, if $\rho(S_\ell)=1$, then $|c|=1$, and \eqref{eigen_identity} implies that $B = 0$.
\end{proof}

We now suppose that it is the case that $\rho(S_\ell) = 1$, aiming at a proof by contradiction. By the proposition, we will obtain a contradiction if we show that for $|c|=1$, the only $A \in (P_\ell)^{2\ell+1}$ satisfying $T^\ell(\omega) A(\omega) = c A(\omega^2)$ is $A = 0$. Labeling the coefficients of $A$, we are looking to find numbers $\alpha_h(j)$ such that
$$
\tau^\ell \begin{pmatrix} \omega^{-\ell} & & & \\ & \omega^{-(\ell-1)} & & & \\ & & \ddots & \\ & & & \omega^\ell \end{pmatrix} \begin{pmatrix} \sum_{j = -(\ell-1)}^{\ell-1} \alpha_{-\ell}(j) \omega^j \\ \vdots \\ \sum_{j = -(\ell-1)}^{\ell-1} \alpha_{\ell}(j) \omega^j \end{pmatrix} = c \begin{pmatrix} \sum_{j = -(\ell-1)}^{\ell-1} \alpha_{-\ell}(j) \omega^{2j} \\ \vdots \\ \sum_{j = -(\ell-1)}^{\ell-1} \alpha_{\ell}(j) \omega^{2j} \end{pmatrix}
$$
By equating the coefficient of $\omega^\nu$ in each coordinate of the two column vectors above we obtain a homogeneous system of linear equations in the $\alpha_h(j)$. We give a proof that the only solution of this system of equations is $\alpha_h(j)=0$ for all $h,j$.

Let $\mathcal{L}(\nu)$ be the linear equation in the coefficients $\alpha_h(j)$ that results from examining the coefficient of $\omega^\nu$ in the system above. Meaningful information is got from $\mathcal{L}(\nu)$ for $-2\ell+1 \leq \nu \leq 2\ell-1$; in this range $\mathcal{L}(\nu)$ becomes
$$
\tau_\ell \begin{pmatrix} \alpha_{-\ell}(\nu+\ell) \\ \alpha_{-\ell+1}(\nu+\ell-1) \\ \vdots \\ \alpha_\ell(\nu-\ell) \end{pmatrix} = c \begin{pmatrix} \alpha_{-\ell}(\nu/2) \\ \alpha_{-\ell+1}(\nu/2) \\ \vdots \\ \alpha_\ell(\nu/2) \end{pmatrix}
$$
with the convention that $\alpha_h(j)=0$ if $j$ is not an integer or $|j| > \ell-1$.

The information we need about the matrix $\tau^\ell$ in solving this system is not very special. The properties we need are contained in
\begin{prop}
\label{prop:special_matrix}
For $\ell = 1, 2,...$ the $(2\ell+1) \times (2\ell+1)$ matrix $\tau^\ell$ satisfies the following properties
\begin{enumerate}[a)]
	\item $\tau^\ell$ is invertible.
	\item $|\tau^\ell_{00}| < 1$.
	\item The matrix $\tau^\ell$ is such that if any one of the linear equations below hold,
	\begingroup\makeatletter\def\f@size{9}\check@mathfonts
	\begin{multline*}
	\tau^\ell \begin{pmatrix} \beta_{-\ell} \\ \vdots \\ \beta_{-1} \\ 0 \\ 0 \\ \vdots \\ 0 \end{pmatrix} = \begin{pmatrix} \gamma_0 \\ 0 \\ \gamma_1 \\ 0 \\ \gamma_2 \\ \vdots \\ \gamma_{\ell} \end{pmatrix}  
	\quad \textrm{or} \quad 
	\tau^\ell \begin{pmatrix} \beta_{-\ell} \\ \vdots \\ \beta_{-1} \\ 0 \\ 0 \\ \vdots \\ 0 \end{pmatrix} = \begin{pmatrix} 0 \\ \gamma_0 \\ 0 \\ \gamma_1 \\ 0 \\ \vdots \\ 0 \end{pmatrix}
	\\ \quad \textrm{or} \quad 
	\tau^\ell\begin{pmatrix} 0 \\ \vdots \\ 0 \\ 0 \\ \beta_1 \\ \vdots \\ \beta_\ell \end{pmatrix} = \begin{pmatrix} \gamma_0 \\ 0 \\ \gamma_1 \\ 0 \\ \gamma_2 \\ \vdots \\ \gamma_\ell \end{pmatrix} 
	\quad \textrm{or} \quad 
	\tau^\ell \begin{pmatrix} 0 \\ \vdots \\ 0 \\ 0 \\ \beta_1 \\ \vdots \\ \beta_\ell \end{pmatrix} = \begin{pmatrix} 0 \\ \gamma_0 \\ 0 \\ \gamma_1 \\ 0 \\ \vdots \\ 0 \end{pmatrix},	
	\end{multline*}
	\endgroup
	then $\beta_i = 0$ and $\gamma_i = 0$ for all $i$.	
\end{enumerate}
\end{prop}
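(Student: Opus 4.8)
The plan is to establish (a)--(c) more or less independently, working throughout with the description of the entries of $\tau^\ell$ as coefficient extractions. Writing $P_n(z):=(z+1)^{\ell-n}(z-1)^{\ell+n}$ and letting $[z^j]Q$ denote the coefficient of $z^j$ in a Laurent polynomial $Q$, the residue identity $\int_\Gamma f(z)z^{m-\ell}\frac{dz}{2\pi i z}=[z^{\ell-m}]f$ together with $i^{2\ell}=(-1)^\ell$ give
$$
\tau^\ell_{m,n}=\sqrt{\frac{(\ell-m)!(\ell+m)!}{(\ell-n)!(\ell+n)!}}\;\frac{(-1)^\ell}{2^\ell}\,[z^{\ell-m}]P_n(z).
$$
Part (a) is then immediate (and was essentially already observed in the text): $\tau^\ell=t^\ell(g(1))$ is the value of a representation on a group element, hence invertible, being in fact unitary. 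For part (b), the formula specializes to $\tau^\ell_{00}=(-1)^\ell 2^{-\ell}[z^\ell](z^2-1)^\ell$, which is $0$ if $\ell$ is odd and equals $(-1)^{\ell/2}\binom{\ell}{\ell/2}2^{-\ell}$ if $\ell$ is even; since for $\ell\geq 2$ the number $\binom{\ell}{\ell/2}$ is a proper summand of $\sum_k\binom{\ell}{k}=2^\ell$, we get $|\tau^\ell_{00}|<1$ in all cases.

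The real work is part (c). First I would observe that the symmetry $P_{-n}(z)=P_n(-z)$ implies $\tau^\ell_{m,-n}=(-1)^{\ell-m}\tau^\ell_{m,n}$, so reflecting an input vector through $n\mapsto -n$ only rescales each coordinate of the image by a sign; in particular the zero pattern of the image is unchanged, and it is enough to treat the two cases in which the input $\vec\beta$ is supported on the indices $\{-\ell,\dots,-1\}$ (alternatively one simply runs the argument separately for each half). Given such a $\vec\beta$, set $b_p:=\beta_{-p}/\sqrt{(\ell+p)!(\ell-p)!}$ for $1\leq p\leq\ell$ and
$$
R(z):=\sum_{p=1}^{\ell}b_p\,(z+1)^{\ell+p}(z-1)^{\ell-p}.
$$
By the displayed formula the $m$-th entry of $\tau^\ell\vec\beta$ equals a nonzero constant times $[z^{\ell-m}]R(z)$, so, after translating vector slots back to the index set $\{-\ell,\dots,\ell\}$, the output pattern $(\gamma_0,0,\gamma_1,0,\dots)$ says precisely that $[z^{\ell-m}]R=0$ whenever $\ell-m$ is odd, i.e.\ that $R$ is an even polynomial, while the output pattern $(0,\gamma_0,0,\gamma_1,\dots)$ says $R$ is odd.

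It then remains to show that a polynomial $R$ of this special shape cannot be nonzero and of pure parity. The key observation is that every term of $R(z)$ is divisible by $(z+1)^{\ell+1}$ (because $\ell+p\geq\ell+1$), whereas
$$
R(-z)=\sum_{p=1}^{\ell}b_p\,(z+1)^{\ell-p}(z-1)^{\ell+p}
$$
is divisible by $(z-1)^{\ell+1}$; hence if $R(z)=\pm R(-z)$ then $R$ is divisible by $(z^2-1)^{\ell+1}$, a polynomial of degree $2\ell+2$, while $\deg R\leq 2\ell$, forcing $R\equiv 0$. Since the polynomials $(z+1)^{\ell+p}(z-1)^{\ell-p}$, $1\leq p\leq\ell$, vanish to the distinct orders $\ell+1,\dots,2\ell$ at $z=-1$ and are therefore linearly independent, $R\equiv 0$ gives $b_p=0$ for all $p$, i.e.\ $\vec\beta=0$, whence also $\tau^\ell\vec\beta=0$. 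I expect the only fiddly part of the write-up to be the bookkeeping matching the two output zero-patterns to the parity conditions on $R$ and verifying the constants that appear are nonzero; the one genuinely substantive point is that imposing ``half-support'' on $\vec\beta$ makes $R$ and $R(-z)$ vanish to order $\ell+1$ at $-1$ and at $+1$, which overdetermines a degree-$\leq 2\ell$ polynomial once a parity is imposed.
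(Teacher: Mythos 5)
Your proof is correct and takes essentially the same approach as the paper: both rescale $\tau^\ell$ into coefficient extractions from the polynomials $(z+1)^{\ell+p}(z-1)^{\ell-p}$, both deduce divisibility of a degree $\leq 2\ell$ polynomial by $(z+1)^{\ell+1}$ from the support restriction on $\vec\beta$ and by $(z-1)^{\ell+1}$ from a parity argument, and both finish by coprimality and degree counting. The only differences are cosmetic: you impose the parity condition on the $\beta$-side polynomial $R(z)$ rather than on the $\gamma$-side polynomial $P(z^2)$ (these are the same object up to nonzero constants), and you supply the explicit sign symmetry $\tau^\ell_{m,-n}=(-1)^{\ell-m}\tau^\ell_{m,n}$ to reduce the last two cases of (c) to the first two, where the paper simply says the remaining cases are handled similarly.
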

\begin{proof}
Of these properties, a) is true simply because $\tau^\ell$ is unitary and b) is straightforward to verify. For c), we will show that the first of these linear equations implies $\beta_i, \gamma_i = 0$ for all $i$, with the others being handled similarly. We make the linear change of variables
$$
\beta_{-n}':= \frac{(-1)^\ell}{2^\ell} \frac{1}{\sqrt{(\ell-n)!(\ell+n)!}}\beta_{-n}, \quad \gamma_i' := \frac{1}{\sqrt{(2\ell-i)!(2i)!}}\gamma_i,
$$
and note from the definition of $\tau_\ell$ that the first linear equation is equivalent to the polynomial identity
\begin{multline*}
\beta_{-\ell}' (z+1)^{2\ell} + \beta_{-\ell+1}' (z+1)^{2\ell-1} (z-1) + \beta_{-1}'(z+1)^{\ell+1}(z-1)^{\ell-1} \\
= \gamma_0' z^{2\ell} + \gamma_1' z^{2\ell-2} + \cdots + \gamma_\ell'.
\end{multline*}
We let $P(z):= \gamma_0' z^\ell + \gamma_1' z^{\ell-1} + \cdots + \gamma_\ell'$. Then $(z+1)^{\ell+1}| P(z^2)$. But $P((-z)^2) = P(z^2)$, so $(z-1)^{\ell+1}|P(z^2)$ also. Since $(z+1)^{\ell+1}$ and $(z-1)^{\ell+1}$ are coprime, this implies by multiplying the two factors together that $(z^2-1)^{\ell+1}|P(z^2)$ and therefore that $(z-1)^{\ell+1}|P(z)$. Yet $\deg P(z) \leq \ell$, so we must have $P = 0$. Because $P=0$, we have $\gamma_i=0$ for all $i$ and $\beta_i = 0$ for all $i$ as well.
\end{proof}

Using this proposition, we examine the linear equations $\mathcal{L}(-2\ell+1)$, $\mathcal{L}(-2\ell+3)$,..., $\mathcal{L}(2\ell-1)$. 
By writing out the resulting equations and utilizing the invertibility of $\tau_\ell$, one sees that the $(2\ell+1)\times (2\ell-1)$ matrix
$$
\alpha:= \begin{pmatrix}
\alpha_{-\ell}(-\ell+1) & \alpha_{-\ell}(-\ell+2) & \cdots & \alpha_{-\ell}(\ell-1) \\
\alpha_{-\ell+1}(-\ell+1) &\ddots & & \vdots \\
\vdots & & & \vdots \\
\alpha_\ell(-\ell+1) & \cdots & \cdots & \alpha_\ell(\ell-1) \end{pmatrix}
$$
with $4\ell-1$ skew-diagonals in total, has entirely $0$ entries along its alternating skew-diagonals (that is, from top left, its $1^\mathrm{st}$, $3^\mathrm{rd}$,..., $(4\ell-1)^\mathrm{th}$ skew-diagonals).

We now consider $\mathcal{L}(-2\ell+2)$, which using the information we have just obtained reduces to the equation
$$
\tau^\ell \begin{pmatrix} \alpha_{-\ell}(-\ell+2) \\ \alpha_{-\ell+1}(-\ell+1) \\ 0 \\ \vdots \\ 0 \end{pmatrix} = c \begin{pmatrix} 0 \\ \alpha_{-\ell+1}(-\ell+1) \\ 0 \\ \alpha_{-\ell+3}(-\ell+1) \\ \vdots \\ 0 \end{pmatrix}
$$
By part c) of Proposition \ref{prop:special_matrix}, both vectors in the above equation must be $0$, so that the second skew-diagonal of $\alpha$ also consists of $0$ entries, as does the first (leftmost) column.

In turn, updating $\alpha$ to reflect the fact that its first column is $0$, the linear equation $\mathcal{L}(-2\ell+4)$ becomes
$$
\tau^\ell \begin{pmatrix} \alpha_{-\ell}(-\ell+4) \\ \alpha_{-\ell+1}(-\ell+3) \\ \alpha_{-\ell+2}(-\ell+2) \\ 0 \\ \vdots \\ 0 \end{pmatrix} = c \begin{pmatrix} \alpha_{-\ell}(-\ell+2) \\ 0 \\ \alpha_{-\ell+2}(-\ell+2) \\ 0 \\ \vdots \\ \alpha_{\ell}(-\ell+2) \end{pmatrix}
$$
Again by part c) of the proposition this implies both vectors above are $0$, so that the fourth skew-diagonal and second column of $\alpha$ consists of $0$ entries.

Continuing on in this fashion, after $\mathcal{L}(-2\ell+2)$ and $\mathcal{L}(-2\ell+4)$, we may consider $\mathcal{L}(-2\ell+6),...,\mathcal{L}(-2)$, with $\mathcal{L}(-2\ell+2j)$ becoming one of the two equations:
$$
\tau^\ell \begin{pmatrix} \alpha_{-\ell}(-\ell+2j) \\ \vdots \\ \alpha_{-\ell+j}(-\ell+j) \\ 0 \\ \vdots \\ 0 \end{pmatrix} = c \begin{pmatrix} 0 \\ \alpha_{-\ell+1}(-\ell+j) \\ 0 \\ \alpha_{-\ell+3}(-\ell+j) \\ \vdots \\ 0 \end{pmatrix} \quad \mathrm{or} \quad c \begin{pmatrix} \alpha_{-\ell}(-\ell+j) \\ 0 \\ \alpha_{-\ell+2}(-\ell+j) \\ 0 \\ \vdots \\ \alpha_\ell(-\ell+j) \end{pmatrix}.
$$
In either case, part c) of the proposition implies that the vectors above are $0$. Taken all together we see that the first $\ell-1$ columns of $\alpha$ consist of $0$ entries, and likewise the $2^\mathrm{nd}$, $4^\mathrm{th}$, ..., $(2\ell-2)^{\mathrm{th}}$ skew-diagonals consist of $0$ entries as well.

This same argument may be run in the reverse direction by considering $\mathcal{L}(2\ell-2))$, $\mathcal{L}(2\ell-4)),...,\mathcal{L}(2)$, with the result that the last $\ell-1$ columns of $\alpha$ consist of $0$ entries, as do the $(4\ell-2)^\mathrm{th}$, $(4\ell-4)^\mathrm{th}$,..., $(2\ell+2)^\mathrm{th}$ skew-diagonals.

We have therefore shown that all skew-diagonals but the middle $2\ell^\mathrm{th}$ (out of $4\ell-1$) consist of $0$ entries. Likewise all columns, except for perhaps the middle $\ell^\mathrm{th}$ (out of $2\ell-1$) consist of $0$ entries. By combining the two pieces of information, one sees that that all entries of the matrix $\alpha$ except perhaps $\alpha_0(0)$ are equal to $0$. But then $\mathcal{L}(0)$ reduces to the statement that $\tau^\ell_{00}\, \alpha_0(0) = c\, \alpha_0(0)$, which by part b) of Proposition \ref{prop:special_matrix} is impossible for $|c|=1$ unless $\alpha_0(0) = 0$ as well.

Hence all eigenvalues of the operator $S_\ell$ must be smaller in modulus than $1$ as claimed, and Theorem \ref{thm:matrix_product1} follows.

\section{Independence of determinant and normalized entries: A proof of Lemma \ref{lem:independence}}
\label{sec:3}

In this section we prove Lemma \ref{lem:independence}. By virtue of the Peter-Weyl theorem, to prove that $\omega^{2^{k+1}-1}$ and $g(\omega^{2^k})\cdots g(\omega)$ become independent as $k\rightarrow\infty$, it will be sufficient to show that
\begin{equation}
\label{eq:ind_moments}
\mathbb{E}\, (\omega^{2^{k+1}-1})^\lambda \pi(g(\omega^{2^k})\cdots g(\omega)) = \mathbb{E}\, \omega^{\lambda 2^k} \pi(g(\omega^{2^k}))\cdots \omega^\lambda \pi(g(\omega)) \rightarrow 0
\end{equation}
as $k\rightarrow\infty$, for any fixed $\lambda \in \mathbb{Z}$ and irreducible representation $\pi$ of $SU(2)$. Our proof will mimic the proof the Theorem \ref{thm:matrix_product1} in the last section. We break the proof into two cases. 

\textbf{Case 1:} $\lambda$ is even. In this case, as before, for $\pi = t^\ell$ with $\ell = 1/2, 3/2, ...$, it is clear that \eqref{eq:ind_moments} is true since each matrix entry of which we are taking the expectation will be a Laurent polynomial in $\omega$ with only odd powers. So it will suffice in this case to consider $\pi = t^\ell$ with $\ell = 1,2,...$.

\textbf{Case 2:} $\lambda$ is odd. In this case, for $\pi = t^\ell$ with $\ell=1,2,...$ it is clear that \eqref{eq:ind_moments} is true, for the same reason as in case 1. Hence it will suffice to consider $\pi = t^\ell$ with $\ell = 1/2, 3/2,...$.

We analyze \textbf{case 2} first. For $\ell=1/2,3/2,...$ let $T^\ell(\omega) = t^\ell(g(\omega^{1/2}))$ and observe that the problem reduces to showing that
\begin{equation}
\label{eq:tends_to_0}
\mathbb{E} \,\omega^{(\lambda/2)2^k} T^\ell(\omega^{2^k})\cdots \omega^{\lambda/2} T^\ell(\omega) \rightarrow 0.
\end{equation}
Note that
$$
\omega^{\lambda/2} T^\ell(\omega)  = \tau^\ell \begin{pmatrix} 
\omega^{\lambda/2-\ell} & & & \\
& \omega^{\lambda/2-\ell+1} & & \\
& & \ddots & \\
& & & \omega^{\lambda/2+\ell}\end{pmatrix}
$$
so although $\lambda/2$ and $\ell$ are half-integers, all powers of $\omega$ appearing above are integers. 

We will need a result similar to Proposition \ref{prop:special_matrix} for half-integer\footnote{Note that this implies $2\ell+1$ is now an even integer.} $\ell$.
\begin{prop}
\label{prop:special_matrix2}
For $\ell = 1/2, 3/2,...$ the $(2\ell+1)\times (2\ell+1)$ matrix $\tau^\ell$ satisfies the following properties
\begin{enumerate}[a)]
\item $\tau^\ell$ is invertible.
\item $|\tau^\ell_{-\ell,-\ell}| < 1$ and $|\tau^\ell_{\ell,\ell}| < 1$.
\item The matrix $\tilde{t}^\ell$ is such that if any one of the linear equations below hold,
\begingroup\makeatletter\def\f@size{10}\check@mathfonts
\begin{multline*}
	\tau^\ell \begin{pmatrix} \beta_{-\ell} \\ \vdots \\ \beta_{-1/2} \\ 0 \\ \vdots \\ 0 \end{pmatrix} = \begin{pmatrix} \gamma_0 \\ 0 \\ \gamma_1 \\ 0 \\ \vdots \\ 0 \end{pmatrix}  
	\quad \textrm{or} \quad 
	\tau^\ell \begin{pmatrix} \beta_{-\ell} \\ \vdots \\ \beta_{-1/2} \\ 0 \\ \vdots \\ 0 \end{pmatrix} = \begin{pmatrix} 0 \\ \gamma_0 \\ 0 \\ \gamma_1 \\ \vdots \\ \gamma_{\ell-1/2} \end{pmatrix}
	\\
	\quad \textrm{or} \quad 
	\tau_\ell\begin{pmatrix} 0 \\ \vdots \\ 0 \\ \beta_{1/2} \\ \vdots \\ \beta_\ell \end{pmatrix} = \begin{pmatrix} \gamma_0 \\ 0 \\ \gamma_1 \\ 0 \\ \vdots \\ 0 \end{pmatrix}   
	\quad \textrm{or} \quad 
	\tau^\ell \begin{pmatrix} 0 \\ \vdots  \\ 0 \\ \beta_{1/2} \\ \vdots \\ \beta_\ell \end{pmatrix} = \begin{pmatrix} 0 \\ \gamma_0 \\ 0 \\ \gamma_1 \\ \vdots \\ \gamma_{\ell-1/2} \end{pmatrix},	
\end{multline*}
\endgroup
then $\beta_i = 0$ and $\gamma_i=0$ for all $i$.
\end{enumerate}
\end{prop}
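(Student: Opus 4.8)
The plan is to get parts a) and b) essentially by inspection and then to prove c) by adapting the argument in the proof of Proposition \ref{prop:special_matrix}, tracking carefully the parity of the exponents of $z$ that occur. Part a) is immediate, since $\tau^\ell$ represents $g(1) \in SU(2)$ and is therefore unitary. For b) I would just compute the two corner entries from the integral formula for $\tau^\ell_{m,n}$: with $m = n = -\ell$ the square-root prefactor is $1$ and the contour integral extracts the coefficient of $z^{2\ell}$ in $(z+1)^{2\ell}$, giving $\tau^\ell_{-\ell,-\ell} = i^{2\ell}/2^\ell$; similarly $\tau^\ell_{\ell,\ell} = (-1)^{2\ell} i^{2\ell}/2^\ell$. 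Both have modulus $2^{-\ell} < 1$ because $\ell \geq 1/2$.

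For c), consider for definiteness the first of the four displayed equations. After the same diagonal rescaling of the unknowns $\beta_\bullet$ and $\gamma_\bullet$ used in the proof of Proposition \ref{prop:special_matrix}, it is equivalent to a polynomial identity
$$
F(z) \;:=\; \sum_{n=-\ell}^{-1/2} \beta'_n\, (z+1)^{\ell-n}(z-1)^{\ell+n} \;=\; \sum_j \gamma'_j\, z^{\,\ell - m_j},
$$
where the powers $\ell - m_j$ appearing on the right are exactly those $\ell - m$ for which the $m$-th coordinate of the right-hand column in the Proposition is a $\gamma$ rather than a $0$. The key combinatorial observation — the genuinely new point compared with the integer case — is that, because $2\ell+1$ is even, a check of indices shows these powers are precisely the \emph{odd} powers of $z$ for the first equation and the \emph{even} powers for the second, and likewise for the third and fourth equations (whose left-hand sides are supported on the positive indices $n = 1/2, \dots, \ell$). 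In other words $F$ is forced to be an odd polynomial or an even polynomial.

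I would then finish as follows. On one hand $\deg F \leq 2\ell$. On the other hand, for the first two equations every summand $(z+1)^{\ell-n}(z-1)^{\ell+n}$ with $n \leq -1/2$ is divisible by $(z+1)^{\ell+1/2}$, and for the last two every summand with $n \geq 1/2$ is divisible by $(z-1)^{\ell+1/2}$ (note $\ell + 1/2 \in \mathbb{Z}$). Hence $F$ vanishes to order $\geq \ell+1/2$ at one of $z = \pm 1$, and since $F(-z) = \pm F(z)$ its multiset of roots is symmetric about the origin, so $F$ also vanishes to order $\geq \ell+1/2$ at the other of $z = \pm 1$. As $(z+1)^{\ell+1/2}$ and $(z-1)^{\ell+1/2}$ are coprime, $(z^2-1)^{\ell+1/2} \mid F$; but $\deg (z^2-1)^{\ell+1/2} = 2\ell+1 > 2\ell \geq \deg F$, which forces $F \equiv 0$. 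This is exactly the place where half-integrality of $\ell$ is used — it is the odd degree $2\ell+1$ of $(z^2-1)^{\ell+1/2}$ that overshoots the degree bound, in contrast with Proposition \ref{prop:special_matrix}, where one instead passed to a polynomial of degree $\leq \ell$. Finally, from $F \equiv 0$ the linear independence of the polynomials $(z+1)^{\ell-n}(z-1)^{\ell+n}$ — seen by repeatedly dividing by the lowest remaining power of $(z+1)$ and evaluating at $z=-1$, or of $(z-1)$ at $z=1$ for the positive-index equations — yields $\beta'_n = 0$ for all $n$, hence $\beta_n = 0$, whereupon the identity collapses to $\sum_j \gamma'_j z^{\ell - m_j} = 0$ and all $\gamma_j = 0$ as well.

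The only step that requires real care is the index bookkeeping that pins down the parity of $F$ for each of the four equations; everything else is a routine transcription of the integer case. I expect that parity computation — together with confirming that $\deg F \leq 2\ell$ is the correct bound in each case — to be the main (and essentially the only) obstacle.
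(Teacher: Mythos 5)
Your proposal is correct and follows exactly the route the paper has in mind: the paper's own proof of Proposition \ref{prop:special_matrix2} simply remarks that a) and b) are straightforward and that c) ``follows by mimicking the argument in Proposition \ref{prop:special_matrix},'' and your argument carries out precisely that mimicry, with the right adaptations (parity of $F$ rather than the substitution $P(z^2)$, divisibility by $(z\pm1)^{\ell+1/2}$, and the degree count $2\ell+1>2\ell$). The only cosmetic difference is that you stay with $F(z)$ and use $F(-z)=\pm F(z)$ directly, whereas the integer case in the paper first descends to a polynomial $P$ of half the degree; both finish identically.
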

\begin{proof} 
a) and b) are of course straightforward, while c) follows by mimicking the argument in Proposition \ref{prop:special_matrix}.
\end{proof}

In demonstrating \eqref{eq:tends_to_0}, we consider four possibilities:

\noindent \textbf{1)} If $0 \notin [\lambda/2-\ell, \lambda/2+\ell]$, then the left hand side of \eqref{eq:tends_to_0} is $0$ for all $k$.

\noindent \textbf{2)} If $0 = \lambda/2-\ell$, then all entries of $\omega^{\lambda/2}T^\ell(\omega)$ are polynomials in $\omega$ (i.e. no negative powers appear) and by inspection of matrix entries,
$$
\mathbb{E} \,\omega^{(\lambda/2)2^k} T^\ell(\omega^{2^k})\cdots \omega^{\lambda/2} T^\ell(\omega) = (t^\ell_{-\ell,-\ell})^k \begin{pmatrix} t^\ell_{-\ell,-\ell} & 0 & \cdots \\ 
\vdots & \vdots & \ddots \\
t^\ell_{\ell,-\ell} & 0 & \cdots \end{pmatrix} \rightarrow 0,
$$
by part b) of the proposition.

\noindent \textbf{3)} If $0 = \lambda/2+\ell$, then all entries of $\omega^{\lambda/2} T^\ell(\omega)$ are polynomials in $\omega^{-1}$ (i.e. no positive powers appear) and in this case,
$$
\mathbb{E} \,\omega^{(\lambda/2)2^k} T^\ell(\omega^{2^k})\cdots \omega^{\lambda/2} T^\ell(\omega) = (t^\ell_{\ell,\ell})^k \begin{pmatrix} \cdots & 0 & t^\ell_{-\ell,\ell} \\ 
\ddots & \vdots & \vdots \\
 \cdots & 0 & t^\ell_{\ell,\ell} \end{pmatrix} \rightarrow 0,
$$
again by part b) of the proposition.

\noindent \textbf{4)} Finally, if $0 \in (\lambda/2-\ell, \lambda/2+\ell)$, then we proceed along lines similar to the last section. We note much as before
\begin{enumerate}
\item The matrix entries of $\omega^{(\lambda/2)2^k} T^\ell(\omega^{2^k})\cdots \omega^{\lambda/2} T^\ell(\omega)$ will be Laurent polynomials in $\omega$ lying in the span of $\{\omega^{(2^{k+1}-1)(\lambda/2+\ell)},..., \omega^{(2^{k+1}-1)(\lambda/2-\ell)}\}$.

\item The coefficients of $\omega^{\nu 2^{k+1}}$ for $\nu \in \mathbb{Z}$ (and in particular constant coefficients) of $\omega^{(\lambda/2)2^k} T^\ell(\omega^{2^k})\cdots \omega^{\lambda/2} T^\ell(\omega)$ are determined entirely by the coefficients of $\omega^{\nu 2^k}$ for $\nu \in \mathbb{Z}$ of $\omega^{(\lambda/2)2^{k-1}} T^\ell(\omega^{2^{k-1}})\cdots \omega^{\lambda/2} T^\ell(\omega)$.
\end{enumerate}

We continue to let $P$ be the space of Laurent polynomials in $\omega$ and define the operator $S^{\ell,\lambda}$ on $P^{2\ell+1}$ as follows: if for $A \in P^{2\ell+1}$,
$$
\omega^{\lambda/2} T^\ell(\omega) A = \begin{bmatrix} \sum_{j\in \mathbb{Z}} \beta_{-\ell}(j) \omega^j, & \cdots & ,\sum_{j \in \mathbb{Z}} \beta_\ell(j) \omega^j \end{bmatrix}^T,
$$
define
$$
S^{\ell,\lambda} A := \begin{bmatrix} \sum_{j \in \mathbb{Z}} \beta_{-\ell}(2j) \omega^j, & \cdots & ,\sum_{j \in \mathbb{Z}} \beta_\ell(2j) \omega^j \end{bmatrix}^T.
$$
Then just as before, for any $v \in \mathbb{C}^{2\ell+1}$, we have
$$
\mathbb{E} \,\omega^{(\lambda/2)2^k} T^\ell(\omega^{2^k})\cdots \omega^{\lambda/2} T^\ell(\omega) = \mathbb{E}\, (S^{\ell,\lambda})^{k+1}v.
$$
We define $P_{\ell,\lambda}:=\textrm{span}_\mathbb{C}\{\omega^{\lambda/2-\ell+1},...,\omega^{\lambda/2+\ell-1}\}$, and note that $S^{\ell,\lambda}$ maps $(P_{\ell,\lambda})^{2\ell+1}$ into itself, and moreover in this fourth case $\mathbb{C}^{2\ell+1} \subset (P_{\ell,\lambda})^{2\ell+1}$. Thus letting $S_{\ell,\lambda}$ be the operator $S^{\ell,\lambda}$ restricted to the finite dimensional complex vector space $(P_{\ell,\lambda})^{2\ell+1}$, if as before we demonstrate $\rho(S_{\ell,\lambda}) < 1$, we will be done. 

The same argument we used in the previous section shows that we will have such a bound on the spectral radius provided there is no nonzero $(2\ell+1)\times (2\ell-1)$ array of numbers $\alpha_h(j)$ such that
$$
\omega^{\lambda/2} \tau^\ell \begin{pmatrix} \omega^{-\ell} & & & \\ & \omega^{-(\ell-1)} & & & \\ & & \ddots & \\ & & & \omega^\ell \end{pmatrix} \begin{pmatrix} \sum_j \alpha_{-\ell}(j) \omega^j \\ \vdots \\ \sum_j \alpha_\ell(j) \omega^j \end{pmatrix} = c \begin{pmatrix} \sum_j \alpha_{-\ell}(j) \omega^{2j} \\ \vdots \\ \sum_j \alpha_\ell(j) \omega^{2j} \end{pmatrix}
$$
for a constant $|c|=1$, where in each sum $j$ runs from $\lambda/2-\ell+1$ to $\lambda/2+\ell-1$.

Let $\mathcal{L}(\nu)$ be the linear equation obtained from examining the coefficient of $\omega^\nu$. The argument will be the same as the previous section except now there will be no need to examine an extraneous middle column. We first consider $\omega$ taken to an odd power, that is the equations $\mathcal{L}(\lambda-2\ell+1), \mathcal{L}(\lambda-2\ell+3),...,\mathcal{L}(\lambda+2\ell-1)$, to see that the array $(\alpha_h(j))$ has entries all $0$ across alternating skew diagonals. We then turn to even powers, considering in succession $\mathcal{L}(\lambda-2\ell+2), \mathcal{L}(\lambda-2\ell+4),...,\mathcal{L}(\lambda-1)$, showing that the first $(2\ell+1)/2$ columns consist of $0$ entries by using Proposition \ref{prop:special_matrix2}. Finally consider $\mathcal{L}(\lambda+2\ell-2),...,\mathcal{L}(\lambda+1)$ to see the same for the last $(2\ell+1)/2$ columns. This verifies that indeed all entries of any such array $(\alpha_h(j))$ must be $0$. Hence $\rho(S_{\ell,\lambda}) < 1$ and \eqref{eq:tends_to_0} is true, which completes the analysis of case 2.
	
What remains is \textbf{case 1}. This is much the same argument and we leave the details to the reader. Again we must break the proof up into cases depending upon whether $0$ lies outside, on the boundary of, or inside the interval $[\lambda/2-\ell,\lambda/2+\ell]$. If $0$ lies outside, matters are again trivial. In the case that $0$ lies on the boundary, we need the additional computation that for $\ell = 1,2,...$, we have $|\tau^\ell_{-\ell,-\ell}| < 1$ and $|\tau^\ell_{\ell,\ell}| < 1,$ but use the same argument. Analysis of the case that $0$ lies inside the interval does not substantially differ from that in section \ref{sec:1}, and in particular all necessary facts about the matrix $\tau^\ell$ have already been proved there.

\section{A remark on equidistribution}

The proofs we have given of the equidstribution results Theorems \ref{thm:matrix_product1} and \ref{thm:matrix_product2} have depended upon the special form of the matrices $g(\omega)$ and $G(\omega)$. It is natural to ask whether this need be so. Indeed, by the analogy with random walks made in the introduction, one may be led to believe more generally that for a compact group $H$ and a function $f: \mathbb{R}/\mathbb{Z} \rightarrow H$ such that $f$ is supported on neither a proper closed subgroup ('non-degeneracy') nor a coset of a proper closed subgroup ('aperiodicity'), the product
\begin{equation}
\label{eq:group_product}
f(2^k t) f(2^{k-1} t)\cdots f(t),
\end{equation}
will equidistribute on $H$ as $k\rightarrow \infty$, where $t \in \mathbb{R}/\mathbb{Z}$ is a random variable with uniform distribution. (More generally one may think to replace multiplication by $2$ by another ergodic or mixing action on a probability space.)

Theorems \ref{thm:matrix_product1} and \ref{thm:matrix_product2} furnish examples of such a phenomenon, but it does not hold in general. Indeed for $H = \mathbb{Z}/2\mathbb{Z}$, let
$$
f(t) = \begin{cases} 0 & \mathrm{if} \; t\in [0,\tfrac18)\cup [\tfrac12,\tfrac58)\cup [\tfrac34,1) \\ 1 & \mathrm{if}\; t\in[\tfrac18,\tfrac12)\cup [\tfrac58,\tfrac34) \end{cases}
$$
and extend $f$ periodically. Then for $k\geq 1$, one may see that 
$$
\mathbb{P}( f(2^k t)\cdots f(t) = 0) = 5/8,
$$
which certainly does not tend to $1/2$.

It would be interesting nonetheless to understand better what general conditions on $f$ ensure that products like \eqref{eq:group_product} equidistribute. Such products bear a resemblance to those arising in certain noncommutative ergodic theorems \cite{KaLe}, but involve a different sort of averaging at a different scale and therefore seem to require different techniques.

\section{Acknowledgements}
I thank Hugh Montgomery for a number of helpful discussions regarding this problem along with making his notes on Rudin-Shapiro polynomials available to me and also Anders Karlsson, Jeff Lagarias, Alon Nishry, and Doron Zeilberger for helpful feedback.


\begin{thebibliography}{99}

\bibitem{AlSh} J.-P. Allouche and J. Shallit. \emph{Automatic Sequences: Theory, Applications, Generalizations.} Cambridge University Press (2003).

\bibitem{AnAsRo} G. E. Andrews, R. Askey, and R. Roy. \emph{Special functions.} Encyclopedia of Mathematics and its Applications, 71. Cambridge University Press (1999).

\bibitem{Ap} D. Applebaum. \emph{Probability on compact Lie groups.}  Probability Theory and Stochastic Modelling, 70. Springer (2014).

\bibitem{Be} J. Beck. ``Flat polynomials on the unit circle - note on a problem of Littlewood." \emph{Bull. London Math. Soc.} 23 (1991): 269-277.



\bibitem{Bri} J. Brillhart. ``On the Rudin-Shapiro polynomials." \emph{Duke Math. J.} 40.2 (1973): 335-353.

\bibitem{BrLoMo} J. Brillhart, J.S. Lomont, and P. Morton. ``Cyclotomic properties of the Rudin-Shapiro polynomials." \emph{J. Reine Angew. Math.} 288 (1976): 37-65. 

\bibitem{BoBo} E. Bombieri and J. Bourgain. ``On Kahane's ultraflat polynomials." \emph{J. Eur. Math. Soc.} 11 (2009): 627-703.

\bibitem{DoHa} C. Doche and L. Habsieger, ``Moments of the Rudin-Shapiro polynomials." \emph{J. Fourier Anal. Appl.} 10 (2004).

\bibitem{Ze} S.B. Ekhad and D. Zeilberger, ``Integrals Involving Rudin-Shapiro Polynomials and Sketch of a Proof of Saffari's Conjecture." \emph{Number Theory: In Honor of Krishna Alladi’s 60-th Birthday}, G. Andrews and F. Gravan, eds., Springer, to appear. Available at \url{http://arxiv.org/abs/1605.06679} and \url{http://www.math.rutgers.edu/~zeilberg/mamarim/mamarimhtml/hss.html} .

\bibitem{Er} T. Erd\'elyi, ``The Mahler Measure of the Rudin-Shapiro Polynomials." \emph{Constructive Approximation.} 43 (2016) 357-369.

\bibitem{Er2} T. Erd\'elyi, ``The phase problem of ultraflat unimodular polynomials: the resolution of the conjecture of Saffari." \emph{Math. Ann.} 321.4 (2001): 905-924.

\bibitem{Fa} J. Faraut, \emph{Analysis on Lie Groups: An Introduction.} Cambridge Studies in Advanced Mathematics, 110. Cambridge University Press (2008).
	
\bibitem{Go} M. Golay, ``Multislit spectrometry." \emph{J. Opt. Soc. Amer.} 39 (1949): 437-444.

\bibitem{Ka} J.-P. Kahane, ``Sur les polyn\^omes \`a coefficients unimodulaires." \emph{Bull. London Math. Soc.} 12 (1980): 321–342.

\bibitem{KaSa} J.-P. Kahane and R. Salem, \emph{Ensembles parfaits et s\'eries trigonom\'etriques.} Hermann, Revised Edition (1994).

\bibitem{KaLe} A. Karlsson and F. Ledrappier, ``Noncommutative  ergodic  theorems." \emph{Geometry, rigidity, and group actions.} Chicago Lectures in Math., Univ. Chicago Press (2011): 396 - 418.

\bibitem{KaIt} Y. Kawada and K. It\^o, ``On the probability distribution on a compact group." \emph{I. Proc. Phys.-Math. Soc. Japan.} 22 (1940) 977-998.

\bibitem{laCo} A. la Cour-Harbo, ``On the Rudin-Shapiro transform." \emph{Appl. Comput. Harmon. Anal.} 24.3 (2008): 310–328.

\bibitem{Mo} H. L. Montgomery, ``Littlewood polynomials.” \emph{Number Theory: In Honor of Krishna Alladi’s 60-th Birthday}, G. Andrews and F. Gravan, eds., Springer, to appear.

\bibitem{Mo2} H. Montgomery, in ``Problem  Session." \emph{Mathematisches  Forschungsinstitut Oberwolfach, Report No. 51/2013} (2013): 3029-3030.

\bibitem{Od} A. M. Odlyzko, ``Search for ultraflat polynomials with plus and minus one coefficients." \emph{Preprint.} Available at \url{http://www.dtc.umn.edu/~odlyzko/doc/ultraflat.pdf} .

\bibitem{Ru} W. Rudin, ``Some theorems on Fourier coefficients," \emph{Proc. Amer. Math. Soc.} 10 (1959) 855-859.

\bibitem{SaZy} R. Salem, and A. Zygmund, ``On lacunary trigonometric series." \emph{Proc. Nat. Acad. Sci. USA.} 33 (1947) 333-338.

\bibitem{Sh} H. S. Shapiro, \emph{Extremal problems for polynomials.} M. S. Thesis, MIT (1951).

\bibitem{Vi} N. Vilenkin, \emph{Special Functions and the Theory of Group Representations.} Translations of Mathematical Monographs. American Mathematical Society (1968).

	
\end{thebibliography}
\end{document}